\crefname{hypothesis}{Hypothesis}{Hypotheses}
\Crefname{ALC@unique}{Line}{Lines}
\colorlet{texcscolor}{blue!50!black}
\colorlet{texemcolor}{red!70!black}
\colorlet{texpreamble}{red!70!black}
\colorlet{codebackground}{black!25!white!25}
\lstdefinestyle{siamlatex}{%
  style=tcblatex,
  texcsstyle=*\color{texcscolor},
  texcsstyle=[2]\color{texemcolor},
  keywordstyle=[2]\color{texemcolor},
  moretexcs={cref,Cref,maketitle,mathcal,text,headers,email,url},
}
\DeclareTotalTCBox{\code}{ v O{} }
{ 
  fontupper=\ttfamily\color{black},
  nobeforeafter,
  tcbox raise base,
  colback=codebackground,colframe=white,
  top=0pt,bottom=0pt,left=0mm,right=0mm,
  leftrule=0pt,rightrule=0pt,toprule=0mm,bottomrule=0mm,
  boxsep=0.5mm,
  #2}{#1}
\patchcmd\newpage{\vfil}{}{}{}
\numberwithin{equation}{section}
\newcommand{\ds}{\mathrm ds}
\newcommand{\dx}{\mathrm dx}
\newcommand{\dtheta}{\mathrm d\theta}
\title{$H^1$-norm stability and convergence of an L2-type method on nonuniform meshes for subdiffusion equation\thanks{Submitted in June 2022, revised in March 2023, accepted in May 2023. C. Quan is supported by NSFC Grant 12271241, Guangdong Basic and Applied Basic Research Foundation (No. 2023B1515020030), and Shenzhen Science and Technology Program (Grant No. RCYX20210609104358076).}}
\author{
Chaoyu Quan
\thanks{SUSTech International Center for Mathematics, Southern University of Science and Technology, Shenzhen, P.R. China (\email{quanchaoyu@gmail.com}).}
\and
Xu Wu
\thanks{Department of Mathematics,  Harbin Institute of Technology, Harbin 150001, China; Department of Mathematics, Southern University of Science and Technology, Shenzhen, China (\email{11849596@mail.sustech.edu.cn}).}
}
\title{$H^1$-norm stability and convergence of an L2-type method on nonuniform meshes for subdiffusion equation\thanks{Submitted in June 2022, revised in March 2023, accepted in May 2023. C. Quan is supported by NSFC Grant 12271241, Guangdong Basic and Applied Basic Research Foundation (No. 2023B1515020030), and Shenzhen Science and Technology Program (Grant No. RCYX20210609104358076).}}
\author{
Chaoyu Quan
\thanks{SUSTech International Center for Mathematics, Southern University of Science and Technology, Shenzhen, P.R. China (\email{quanchaoyu@gmail.com}).}
\and
Xu Wu
\thanks{Department of Mathematics,  Harbin Institute of Technology, Harbin 150001, China; Department of Mathematics, Southern University of Science and Technology, Shenzhen, China (\email{11849596@mail.sustech.edu.cn}).}
}
\begin{document}
\maketitle
\sloppy

\begin{abstract}
This work establishes $H^1$-norm stability and convergence for an L2 method on general nonuniform meshes when applied to the subdiffusion equation. Under mild constraints on the time step ratio $\rho_k$, such as $0.4573328\leq \rho_k\leq 3.5615528$ for $k\geq 2$, the positive semidefiniteness of a crucial bilinear form associated with the L2 fractional-derivative operator is proved. This result enables us to derive long time $H^1$-stability of L2 schemes. These positive semidefiniteness and $H^1$-stability properties hold for standard graded meshes with grading parameter $1<r\leq 3.2016538$. In addition, error analysis in the $H^1$-norm for general nonuniform meshes is provided, and convergence of order $(5-\alpha)/2$ in $H^1$-norm is proved for modified graded meshes when $r>5/\alpha-1$. To the best of our knowledge, this study is the first work on $H^1$-norm stability and convergence of L2 methods on general nonuniform meshes for the subdiffusion equation.
\end{abstract}

\begin{keywords}
L2-type method, subdiffusion equation, graded mesh, positive semidefiniteness, $H^1$-norm stability and convergence
\end{keywords}

\begin{AMS}
35R11, 65M12
\end{AMS}

\section{Introduction}
The time-fractional diffusion equation is derived from continuous time random walks \cite{metzler2000random,gorenflo2002time}, which incorporates a fractional derivative in time to model memory effects in diffusing materials.

Over the past decade, numerous numerical methods have been proposed for solving the time-fractional diffusion equation, many of which use uniform time meshes. 
For example, the  L1 scheme of $(2-\alpha)$-order has been extensively developed by Langlands and Henry \cite{langlands2005accuracy}, Sun-Wu \cite{sun2006fully}, and Lin-Xu \cite{lin2007finite}, and others.
Alikhanov \cite{alikhanov2015new} proposes the L2-1$_\sigma$ scheme for the time-fractional diffusion equation with variable coefficients, which has second-order accuracy in time. 
Gao-Sun-Zhang \cite{gao2014new} study an L1-2 method of $(3-\alpha)$-order on uniform meshes, while Lv-Xu \cite{lv2016error} analyze a slightly different L2 fractional-derivative operator for uniform meshes, achieving optimal convergence of $(3-\alpha)$-order in time under strong regularity assumptions on the exact solution. More recently, Alikhanov-Huang \cite{alikhanov2021high} propose an L2 approximation and corresponding schemes for the subdiffusion equation with variable coefficients. 

In recent years, numerical methods on nonuniform time meshes for solving the time-fractional diffusion equation have garnered increasing attention, particularly in the case of graded meshes. 
In fact, the exact solution to the time-fractional diffusion equation can exhibit low regularity near the initial time, which can lead to a reduced convergence rate of numerical solutions.
To overcome this challenge, researchers have explored the use of nonuniform time meshes to achieve the desired sharp convergence rate even under low regularity assumptions on the exact solution.
For example, Stynes-Riordan-Gracia \cite{stynes2017error} prove the sharp error analysis of L1 scheme on graded meshes.
Kopteva  \cite{kopteva2019error} provides a different analysis framework of the L1 scheme on graded meshes in two and three spatial dimensions.
Chen-Stynes \cite{chen2019error} prove the second-order convergence of L2-1$_\sigma$ scheme on fitted meshes combining the graded meshes and quasiuniform meshes.
Kopteva-Meng \cite{kopteva2020error}  provide sharp pointwise-in-time error bounds for quasi-graded termporal meshes with arbitrary degree of grading for L1 and L2-1$_\sigma$ schemes. 
Later, Kopteva \cite{kopteva2021error} generalizes this sharp pointwise error analysis to an L2-type scheme on quasi-graded meshes.
In the case of general nonuniform meshes, Liao-Li-Zhang establish the sharp error analysis for the L1 scheme of linear reaction-subdiffusion equations in \cite{liao2018sharp} where a useful discrete fractional Gr\"onwall inequality is proposed and then Liao-McLean-Zhang \cite{liao2019discrete,liao2018second} further explore the L2-1$_\sigma$ scheme.

In addition to the L1, L2-1$_\sigma$, and L2 methods on nonuniform meshes, it is worth mentioning that convolution quadrature methods with corrections have also shown promise in overcoming the convergence rate problem for the time-fractional diffusion equation, see for example \cite{jin2017correction,jin2020subdiffusion,jin2016two} and the references therein.

In this work, we first investigate the $H^1$-stability of an L2-type method (as studied in \cite{lv2016error,kopteva2021error}) on general nonuniform meshes for subdiffusion equations. We focus on the L2 fractional-derivative operator, denoted by $L_k^\alpha$, and prove that the following bilinear form
\begin{equation}\label{eq:Bn}
    \mathcal B_n(v,w) = \sum_{k=1}^{n}\langle L_k^\alpha v, \delta_k w\rangle,\quad \delta_k w \coloneqq w^k-w^{k-1},~n\geq 1,
\end{equation}
is positive semidefinite, under the mild restrictions outlined in \eqref{thm1cond1}--\eqref{thm1cond2} for the time step ratios $\rho_k$ (see Theorem \ref{thm1} for details).
If $0.4573328\leq \rho_k\leq 3.5615528,$
these mild restrictions are satisfied as shown in Corollary \ref{cor}. 
Of particular note is the fact that the positive semidefiniteness of $\mathcal B_n$ on general nonuniform meshes is unknown for both the L2 and L2-1$_\sigma$ operators, as highlighted in \cite{liao2020second} and \cite[Table 1]{ji2020adaptive}.
This positive semidefiniteness allows us to establish
 long time $H^1$-stability of the implicit L2 scheme for the subdiffusion equation with homogeneous Dirichlet boundary condition (see Theorem \ref{thm2}).
In addition, we consider the special case of graded meshes and demonstrate that if the grading parameter $1<r\leq 3.2016538$, then $\mathcal B_n$ is positive semidefinite, allowing for the establishment of similar $H^1$-stability for the L2 scheme, as outlined in Theorem \ref{thm3}. These findings offer a valuable contribution to the existing literature on numerical methods for subdiffusion equations, providing new insights into the stability properties of L2-type methods on nonuniform meshes.

Building on the positive definiteness results, we next provide the error analysis in $H^1$-norm for the L2 scheme of subdiffusion equation on general nonuniform meshes. 
In particular, we prove that the convergence rate in $H^1$-norm is $\mathcal O(N^{-(5-\alpha)/2})$ for the L2 scheme on modified graded meshes when $r>5/\alpha-1$ (see Theorem \ref{thm:conv_graded}). Although numerical experiments suggest that the actual convergence order in $H^1$-norm could be as high as $3-\alpha$, which is larger than $(5-\alpha)/2$, we are currently unable to provide a proof within our analytical framework. We plan to investigate this matter further in future studies.  
It is worth mentioning that \cite{huang20200optimal,huang2020optimal,huang2022sharp} provide sharp error analysis in the $H^1$-norm for the L1 and L2-1$_\sigma$ schemes. However, establishing $H^1$-norm convergence for the L2 scheme poses a challenge due to the non-monotonicity of the L2 coefficients and the failure of the discrete fractional Grönwall inequality.

Overall, our work contributes to the growing body of research on nonuniform time meshes for time-fractional diffusion equations. By demonstrating the positive semidefiniteness of a bilinear form associated with the L2 scheme on general nonuniform meshes, we establish the $H^1$-stability of this method and provide an $H^1$-norm error analysis for subdiffusion equations. These findings may be useful for designing accurate and efficient numerical algorithms for a wide range of time-fractional problems in mathematical physics and engineering.

This work is organized as follows. 
Section \ref{sect2} provides the derivation, explicit expression, and reformulation of the L2 fractional-derivative operator. In Section \ref{sect3}, we prove the positive semidefiniteness of $\mathcal B_n$ under mild restrictions on the time step ratios. Section \ref{sect4} establishes the long time $H^1$-stability of the L2 scheme based on the positive semidefiniteness result, including a discussion of the case of graded meshes. The $H^1$-norm error estimates of the L2 schemes are provided for both general nonuniform meshes and modified graded meshes in Section \ref{sect5}. Finally, in Section \ref{sect6}, we present numerical tests on the L2 scheme using the modified graded mesh.

\section{Discrete fractional-derivative operator}\label{sect2}
In this part we show the derivation, explicit expression and reformulation  of L2 operator on general nonuniform mesh.

We consider the L2 approximation of the Caputo fractional derivative 
\begin{align*}
    \partial_t^\alpha u = \frac{1}{\Gamma(1-\alpha)} \int_0^t \frac{u'(s)}{(t-s)^\alpha} \, \ds. 
\end{align*}
Take a nonuniform time mesh $0 = t_0<t_1<\ldots<t_{k-1}<t_k<\ldots$ with $k\geq 1$. 
When $k=1$, we use the standard linear Lagrangian polynomial interpolating $\{u^0,u^1\}$:
$
H_1^1 (t) \coloneqq \frac{t-t_1}{t_{0}-t_1} u^0 + \frac{t-t_0}{t_{1}-t_0} u^1.
$ 
When $k\geq 2$, for $1\leq j\leq k-1$, we use the standard quadratic Lagrangian polynomial interpolating $\{u^{j-1},u^j,u^{j+1}\}$:
\begin{equation}\label{eq:H2j}
\begin{aligned}
H_2^j (t) &\coloneqq \frac{(t-t_j)(t-t_{j+1})}{(t_{j-1}-t_j)(t_{j-1}-t_{j+1})} u^{j-1} + \frac{(t-t_{j-1})(t-t_{j+1})}{(t_{j}-t_{j-1})(t_{j}-t_{j+1})} u^{j} \\ 
&\quad + \frac{(t-t_{j-1})(t-t_{j})}{(t_{j+1}-t_{j-1})(t_{j+1}-t_{j})} u^{j+1},
\end{aligned}
\end{equation}
while for $j=k$, we use the quadratic Lagrangian polynomial $H_2^{k-1} (t)$ defined in \eqref{eq:H2j}. Let $\tau_j = t_j-t_{j-1}$. 
At $t = t_k$, the fractional derivative $\partial_t^\alpha u(t)$ is approximated by the discrete fractional-derivative operator
{\footnotesize
\begin{equation}\label{eq:Lk0}
\begin{aligned}
L_1^\alpha u& = \frac{u^1-u^0}{\Gamma(2-\alpha)\tau_1^{\alpha}},\\
    L_k^\alpha u 
    & = \frac{1}{\Gamma(1-\alpha)} \left(
    \sum_{j=1}^{k-1}\int_{t_{j-1}}^{t_j} \frac{\partial_s H_2^j(s)}{(t_k-s)^\alpha}\,\ds + \int_{t_{k-1}}^{t_k} \frac{\partial_s H_2^{k-1}(s)}{(t_k-s)^\alpha}\,\ds \right) \\
    & = \frac{1}{\Gamma(1-\alpha)}\left( \sum_{j=1}^{k-1} ( a_{j}^{(k)} u^{j-1} +  b_{j}^{(k)}u^j +  c_{j}^{(k)} u^{j+1} ) 
    + a_{k}^{(k)} u^{k-2} +  b_{k}^{(k)}  u^{k-1} +  c_{k}^{(k)} u^{k}\right),~k\geq 2
\end{aligned}   
\end{equation}}
where for $1\leq j\leq k-1$,
{\small
\begin{equation}
\begin{aligned}\label{eq:aj}
    a_{j}^{(k)} & =  \int_{t_{j-1}}^{t_j} \frac{2s -t_j-t_{j+1}}{\tau_{j}(\tau_{j}+\tau_{j+1})} \frac{1}{(t_k-s)^\alpha}\,\ds 
   = \int_0^1 \frac{-2 \tau_j(1-\theta)-\tau_{j+1}}{(\tau_{j}+\tau_{j+1})(t_k-(t_{j-1}+\theta \tau_j))^\alpha}\,\dtheta,\\
     b_{j}^{(k)} & = -\int_{t_{j-1}}^{t_j} \frac{2s -t_{j-1}-t_{j+1}}{\tau_{j}\tau_{j+1}} \frac{1}{(t_k-s)^\alpha}\,\ds 
   = -\int_0^1 \frac{2 \tau_j\theta-\tau_j-\tau_{j+1}}{\tau_{j+1}(t_k-(t_{j-1}+\theta \tau_j))^\alpha}\,\dtheta, \\
     c_{j}^{(k)}  & =  \int_{t_{j-1}}^{t_j} \frac{2s -t_{j-1}-t_{j}}{\tau_{j+1}(\tau_{j}+\tau_{j+1})} \frac{1}{(t_k-s)^\alpha}\,\ds 
   = \int_0^1 \frac{\tau_j^2(2\theta-1)}{\tau_{j+1}(\tau_{j}+\tau_{j+1})(t_k-(t_{j-1}+\theta \tau_j))^\alpha}\,\dtheta,
\end{aligned}
\end{equation}}
and
{\small
\begin{align*}
    a_{k}^{(k)} & =  \int_{t_{k-1}}^{t_k} \frac{2s -t_{k-1}-t_{k}}{\tau_{k-1}(\tau_{k-1}+\tau_{k})} \frac{1}{(t_k-s)^\alpha}\ds 
   = \int_0^1 \frac{\tau_k^2(2\theta-1)}{\tau_{k-1}(\tau_{k-1}+\tau_{k})(t_k-(t_{k-1}+\theta \tau_k))^\alpha}\dtheta,\\
  b_{k}^{(k)}& = -\int_{t_{k-1}}^{t_k} \frac{2s -t_{k-2}-t_{k}}{\tau_{k-1}\tau_{k}} \frac{1}{(t_k-s)^\alpha}\,\ds 
   = -\int_0^1 \frac{\tau_k(2\theta-1)+\tau_{k-1}}{\tau_{k-1}(t_k-(t_{k-1}+\theta \tau_k))^\alpha}\,\dtheta, \\
    c_{k}^{(k)} & =  \int_{t_{k-1}}^{t_k} \frac{2s -t_{k-2}-t_{k-1}}{\tau_{k}(\tau_{k-1}+\tau_{k})} \frac{1}{(t_k-s)^\alpha}\,\ds 
   = \int_0^1 \frac{2\tau_k\theta+\tau_{k-1}}{(\tau_{k-1}+\tau_{k})(t_k-(t_{k-1}+\theta \tau_k))^\alpha}\,\dtheta.
\end{align*}}
It can be verified that $ a_{j}^{(k)}<0$, $b_{j}^{(k)}>0$, $ c_{j}^{(k)}>0$ for $1\leq j\leq k-1$, and $a_{k}^{(k)}>0$, $b_{k}^{(k)}<0$, $ c_{k}^{(k)}>0$.
Furthermore, 
$ a_{j}^{(k)}+b_{j}^{(k)}+ c_{j}^{(k)} =0$
always holds for $1\leq j\leq k$.

Specifically speaking, we can figure out the explicit expressions of $ a_{j}^{(k)}$ and $ c_{j}^{(k)}$ as follows (note that $b_j^k = - a_{j}^{(k)}- c_{j}^{(k)}$): for $1\leq j\leq k-1$,
\begin{equation}\label{eq:akj}
\begin{aligned}
   a_{j}^{(k)}
   & = \frac{\tau_{j+1}}{(1-\alpha)\tau_j(\tau_j+\tau_{j+1})}(t_k-t_j)^{1-\alpha}-\frac{2\tau_{j}+\tau_{j+1}}{(1-\alpha)\tau_j(\tau_j+\tau_{j+1})}(t_k-t_{j-1})^{1-\alpha}\\
   &\quad +\frac{2}{(2-\alpha)(1-\alpha)\tau_j(\tau_j+\tau_{j+1})}\left[(t_k-t_{j-1})^{2-\alpha}-(t_k-t_j)^{2-\alpha}\right],\\
    c_{j}^{(k)}
   &= \frac{1}{(1-\alpha)\tau_{j+1}(\tau_j+\tau_{j+1})}
   \Big[-\tau_j( (t_k-t_{j-1})^{1-\alpha}+ (t_k-t_{j})^{1-\alpha})\\
   & \quad+2(2-\alpha)^{-1} ( (t_k-t_{j-1})^{2-\alpha}- (t_k-t_{j})^{2-\alpha}) \Big],
\end{aligned}
\end{equation}
while for $j=k$,
\begin{equation}\label{eq:a_k1}
\begin{aligned}
    a_{k}^{(k)}
    &=\frac{\alpha\tau_k^2}{(2-\alpha)(1-\alpha)\tau_{k-1}(\tau_{k-1}+\tau_{k}) \tau_k^\alpha},\\
   c_{k}^{(k)}
    &=\frac{1}{(1-\alpha)\tau_k^\alpha}+\frac{\alpha\tau_k}{(2-\alpha)(1-\alpha)(\tau_{k-1}+\tau_{k}) \tau_k^\alpha}.
    \end{aligned}
\end{equation}
We reformulate the discrete fractional derivative $L_k^\alpha$ in \eqref{eq:Lk0} as
\begin{equation}\label{eq:Lk1}
\small
\begin{aligned}
L_1^\alpha u& = \frac{1}{\Gamma(2-\alpha)\tau_1^{\alpha}}\delta_1 u,\\
    L_k^\alpha u
  &=\frac{1}{\Gamma(1-\alpha)}\left( ( c_{k}^{(k)} + c_{k-1}^{(k)}) \delta_k u- a_{k}^{(k)}\delta_{k-1} u - a_{1}^{(k)}\delta_1 u  +\sum_{j=2}^{k-1}  d_{j}^{(k)} \delta_j u \right), \quad k\geq 2,
    \end{aligned} 
\end{equation}
where $\delta_j u= u^j-u^{j-1}$ and $ d_{j}^{(k)}\coloneqq c_{j-1}^{(k)}- a_{j}^{(k)}.$
To establish the $H^1$-stability of L2-type method for fractional-order parabolic problem, we shall prove the positive semidefiniteness of $\mathcal B_n$ defined in \eqref{eq:Bn}.

\section{Positive semidefiniteness of bilinear form $\mathcal B_n$}\label{sect3}
\begin{lemma}[Properties of $ a_{j}^{(k)}$, $ c_{j}^{(k)}$ and $ d_{j}^{(k)}$]\label{lemmapro}
Given a nonuniform mesh $\{\tau_j\}_{j\geq 1}$,
the following properties of the L2 coefficients in \eqref{eq:aj} hold:
\begin{itemize}
\item[(P1)] $ a_{j}^{(k)}<0,~1\leq j\leq k-1,~k\geq 2$;
\item[(P2)] $ a_{j}^{(k+1)}- a_{j}^{(k)}>0,~1\leq j\leq k-1,~k\geq 2$; 
\item[(P3)] $ a_{j+1}^{(k)}- a_{j}^{(k)}<0,~1\leq j\leq k-2,~k\geq 3$;
\item[(P4)] $ a_{j+1}^{(k)}- a_{j}^{(k)}< a_{j+1}^{(k+1)}- a_{j}^{(k+1)},~1\leq j\leq k-2,~k\geq 3$;
\item[(P5)] $ c_{j}^{(k)}>0,~1\leq j\leq k-1,~k\geq 2$;
\item[(P6)] $ c_{j}^{(k+1)}- c_{j}^{(k)}<0,~1\leq j\leq k-1,~k\geq 2$; 
\item[(P7)] $ d_{j}^{(k)}>0,~2\leq j\leq k-1,~k\geq 3$;
\item[(P8)] $ d_{j}^{(k+1)}- d_{j}^{(k)}<0,~2\leq j\leq k-1,~k\geq 3$.
\end{itemize}
Furthermore, if the nonuniform mesh $\{\tau_j\}_{j\ge 1}$,
 with  $\rho_j \coloneqq \tau_j/\tau_{j-1}$ satisfies
\begin{equation}\label{condition:rho}
    \frac{1}{\rho_{j+1}}\ge\frac{1}{\rho_{j}^2(1+\rho_{j})}-3,\quad \forall j\geq 2,
\end{equation}
then the following properties of $ d_{j}^{(k)}$ hold:
\begin{itemize}
\item[(P9)] $ d_{j+1}^{(k)}- d_{j}^{(k)}>0,~2\leq j\leq k-2,~k\geq 4$;
\item[(P10)] $ d_{j+1}^{(k)}- d_{j}^{(k)}> d_{j+1}^{(k+1)}- d_{j}^{(k+1)},~2\leq j\leq k-2,~k\geq 4$.
\end{itemize}
\end{lemma}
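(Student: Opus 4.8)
The plan is to work directly from the explicit formulas \eqref{eq:akj} and \eqref{eq:a_k1} for $a_j^k$ and $c_j^k$, and to handle the ten properties in roughly the order (P1),(P5) first (sign of single coefficients), then (P2),(P3),(P4),(P6) (monotonicity of $a$ and $c$ in the two indices and the mixed second difference), then deduce (P7),(P8) for $d_j^k=c_{j-1}^k-a_j^k$, and finally (P9),(P10) under the extra hypothesis \eqref{condition:rho}. For (P1) and (P5), rather than manipulating the closed forms I would use the integral representations in \eqref{eq:aj}: on $(t_{j-1},t_j)$ the weight multiplying $1/(t_k-s)^\alpha$ in $a_j^k$ is $\bigl(2s-t_j-t_{j+1}\bigr)/\bigl(\tau_j(\tau_j+\tau_{j+1})\bigr)$, which is negative throughout that interval (since $s<t_j<t_{j+1}$), giving $a_j^k<0$; the weight in $c_j^k$ is $(2s-t_{j-1}-t_j)/(\tau_{j+1}(\tau_j+\tau_{j+1}))$, which is an odd function about the midpoint of the interval, and pairing $s$ with its reflection together with the strict monotonicity of $s\mapsto (t_k-s)^{-\alpha}$ shows the integral is strictly positive. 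This "reflection/pairing" trick, i.e. writing an integral of an odd-about-the-midpoint weight against a monotone kernel, is the workhorse I expect to reuse for several of the difference estimates.

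For the index-monotonicity properties I would mostly differentiate (or difference) the closed forms in $t_k$. Introduce $g(t)=t^{1-\alpha}$ and note $(t_k-t_{j-1})^{1-\alpha}-(t_k-t_j)^{1-\alpha}$ and its second-order analogues are differences of $g$; since $g$ is increasing and concave and $g'(t)=t^{-\alpha}$, $g''(t)=-\alpha t^{-\alpha-1}$, the signs of $a^{k+1}_j-a^k_j$ (P2), $a^k_{j+1}-a^k_j$ (P3), and $c^{k+1}_j-c^k_j$ (P6) reduce to monotonicity/convexity statements about $g$ on shifted intervals, which can be read off by Taylor expansion with integral remainder. A cleaner route for (P2) and (P6): from \eqref{eq:aj}, $a^{k+1}_j-a^k_j=\int_0^1 w_a(\theta)\bigl[(t_{k+1}-s)^{-\alpha}-(t_k-s)^{-\alpha}\bigr]\dtheta$ with $s=t_{j-1}+\theta\tau_j$ and $w_a\le 0$; since $(t_{k+1}-s)^{-\alpha}<(t_k-s)^{-\alpha}$ the integrand is $\ge 0$, giving (P2) immediately, and the same device with $w_c$ of mixed sign plus the reflection trick gives (P6). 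Property (P4), the mixed second difference $a^{k+1}_{j+1}-a^{k+1}_j>a^k_{j+1}-a^k_j$, is handled by applying the pairing argument to $\bigl[(t_{k+1}-s)^{-\alpha}-(t_k-s)^{-\alpha}\bigr]$ as the (now monotone decreasing in $s$) kernel against the odd-about-midpoint-type weight obtained from $a^k_{j+1}-a^k_j$; alternatively one differences in $j$ first and then in $k$ and checks the resulting kernel is monotone.

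Properties (P7) and (P8) follow formally once (P2),(P3),(P5),(P6) are in hand, because $d_j^k=c_{j-1}^k-a_j^k$: for (P7) one needs $c_{j-1}^k>a_j^k$, which is clear since $c_{j-1}^k>0>a_j^k$ by (P5),(P1); for (P8), $d^{k+1}_j-d^k_j=(c^{k+1}_{j-1}-c^k_{j-1})-(a^{k+1}_j-a^k_j)<0$ since the first bracket is negative by (P6) and the subtracted bracket is positive by (P2). The genuinely delicate part is (P9) and (P10), which is exactly why the structural hypothesis \eqref{condition:rho} is imposed: here $d^k_{j+1}-d^k_j=(c^k_j-c^k_{j-1})-(a^k_{j+1}-a^k_j)$, and neither term has a fixed sign on its own, so I would substitute the closed forms, factor out $(t_k-t_{j-1})^{1-\alpha}$ or a comparable scale, and express everything in terms of the ratios $\rho_j,\rho_{j+1}$ and $\mu:=(t_k-t_j)/(t_k-t_{j-1})\in(0,1)$. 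After expanding $g(t_k-t_{j+1})$, $g(t_k-t_j)$, $g(t_k-t_{j-1})$ to second order one is left with an inequality that is linear in $1/\rho_{j+1}$ with a negative-for-the-worst-$\mu$ coefficient, and optimizing over the admissible range of $\mu$ produces precisely the bound $1/\rho_{j+1}\ge 1/(\rho_j^2(1+\rho_j))-3$. I expect the bulk of the real work, and the main obstacle, to be this case analysis for (P9): controlling the remainder terms in the Taylor expansion uniformly in $\mu\in(0,1)$ and $k$, and verifying that the worst case over $k$ (equivalently over $\mu$) is the one that yields \eqref{condition:rho}. Property (P10) is then the $k$-monotone refinement of (P9): writing $d^{k}_{j+1}-d^k_j-(d^{k+1}_{j+1}-d^{k+1}_j)$ and using the same substitution, the claim becomes that the left side of the (P9) inequality is decreasing in $k$, which one checks by differencing the already-expanded expression in $k$ and noting that each differenced building block $(t_{k+1}-\cdot)^{-\alpha}-(t_k-\cdot)^{-\alpha}$ is again monotone in its argument, so the reflection/pairing argument applies one more time.
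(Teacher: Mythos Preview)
Your treatment of (P1), (P2), (P5)--(P8) is essentially the paper's and is fine. For (P3)--(P4) your plan is too vague to work as written: $a^k_j$ and $a^k_{j+1}$ are integrals over \emph{different} intervals $[t_{j-1},t_j]$ and $[t_j,t_{j+1}]$ against different weights, so there is no single ``odd-about-midpoint'' weight to pair against, and invoking concavity of $g(t)=t^{1-\alpha}$ on the closed forms does not obviously close the estimate. The paper's device is to integrate by parts in the defining integral for $a_j^k$ in two different ways, obtaining both $a_j^k=-(t_k-t_j)^{-\alpha}+\text{(positive integral)}$ and $a_j^k=-(t_k-t_{j-1})^{-\alpha}-\text{(positive integral)}$. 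Using the first form for $a_j^k$ and the second for $a_{j+1}^k$ makes the boundary terms cancel exactly, and $a^k_{j+1}-a^k_j$ becomes a sum of two manifestly negative integrals with kernels $(t_k-t_j\pm\cdot)^{-\alpha-1}$; (P2) and (P4) then follow at once from the $k$-monotonicity of those kernels. You will need this integration-by-parts reformulation, not just a reflection argument.

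The genuine gap is (P9)--(P10). The paper does \emph{not} Taylor expand or optimize over $k$ (equivalently over your $\mu$). Using the same integrated-by-parts formulas, it writes $d^k_{j+1}-d^k_j$ as a sum of four integrals against the kernels $(t_k-\cdot)^{-\alpha-1}$. One of these carries the weight $(\tau_j+\tau_{j+1}+s\tau_j)(1-s)$; the key step is the identity $\int_0^1(1-3s)(1-s)\,ds=0$ combined with the fact that the kernel decreases in $s$, which lets one replace that weight from below by $(4\tau_j+3\tau_{j+1})\,s(1-s)$. After also using $(t_k-t_j+s\tau_j)^{-\alpha-1}>(t_k-t_{j-1}+s\tau_{j-1})^{-\alpha-1}$ and discarding one positive term, everything collapses to a single integral times the factor
\[
\frac{\tau_j^3}{\tau_{j+1}(\tau_j+\tau_{j+1})}-\frac{\tau_{j-1}^3}{\tau_j(\tau_{j-1}+\tau_j)}+\frac{(4\tau_j+3\tau_{j+1})\tau_j}{\tau_j+\tau_{j+1}},
\]
whose nonnegativity is \emph{exactly} $1/\rho_{j+1}\ge 1/(\rho_j^2(1+\rho_j))-3$. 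Your proposed second-order Taylor expansion followed by a worst-case in $\mu$ would carry $\alpha$-dependent remainder constants and there is no reason the extremal $\mu$ should reproduce the specific combination $1/(\rho_j^2(1+\rho_j))-3$; the assertion that it ``produces precisely the bound'' \eqref{condition:rho} is unsupported. Since the lemma fixes \eqref{condition:rho} as the hypothesis, you need an argument that works under exactly that condition, and the integral-inequality trick above is what delivers it. Property (P10) is then the same computation with the kernel replaced by its $k$-difference, using convexity of $t\mapsto t^{-\alpha-1}$.
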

\begin{proof}
We first provide two equivalent forms of $ a_{j}^{(k)}$ in \eqref{eq:aj} as follows:
\begin{equation}\label{eq:akj_rem}
\small
    \begin{aligned}
         a_{j}^{(k)} 
        &=\frac{1}{\tau_{j}+\tau_{j+1}}\int_0^1 (t_k-(t_{j-1}+s \tau_j))^{-\alpha}\,{\rm d}( \tau_j s^2 -(2\tau_j+\tau_{j+1})s)\\
     &=-(t_k-t_j)^{-\alpha}+\frac{\alpha\tau_j}{\tau_j+\tau_{j+1}}
    \int_0^{1} (\tau_j+\tau_{j+1}+s\tau_j)
    (1-s)(t_k-t_{j}+s\tau_j)^{-\alpha-1}\,\ds
    \end{aligned}
\end{equation}
and
\begin{equation}\label{eq:akj1_rem}
\small
    \begin{aligned}
         a_{j}^{(k)} & =\int_0^1 \frac{-2 \tau_j(1-s)-\tau_{j+1}}{(\tau_{j}+\tau_{j+1})(t_k-(t_{j-1}+s \tau_j))^\alpha}\,\ds =\int_0^1 \frac{-2 \tau_j s-\tau_{j+1}}{(\tau_{j}+\tau_{j+1})(t_k-t_{j}+s \tau_j)^\alpha}\,\ds\\
        &=\frac{1}{\tau_{j}+\tau_{j+1}}\int_0^1 (t_k-t_{j}+s \tau_j)^{-\alpha}\,{\rm d}( - \tau_j s^2-\tau_{j+1}s)\\
    &=-(t_k-t_{j-1})^{-\alpha}-\frac{\alpha\tau_j}{\tau_j+\tau_{j+1}}
    \int_0^{1} (\tau_j+\tau_{j+1}-s\tau_j)(1-s)
    (t_k-t_{j-1}-s\tau_j)^{-\alpha-1}\ds.
    \end{aligned}
\end{equation}
It is not difficult to see $ a_{j}^{(k)}<0$, i.e., (P1) holds. 

Combining \eqref{eq:akj_rem} and \eqref{eq:akj1_rem}, we have
\begin{equation}\label{ineq:akjdiff}
\small
    \begin{aligned}
     a_{j+1}^{(k)}- a_{j}^{(k)}
    &=-\frac{\alpha\tau_j}{\tau_j+\tau_{j+1}}
    \int_0^{1} (\tau_j+\tau_{j+1}+s\tau_j)
    (1-s)(t_k-t_{j}+s\tau_j)^{-\alpha-1}\,\ds
    \\
    &-\frac{\alpha\tau_{j+1}}{\tau_{j+1}+\tau_{j+2}}
     \int_0^{1}(\tau_{j+1}+\tau_{j+2}-s\tau_{j+1}) (1-s)(t_k-t_{j}-s\tau_{j+1})^{-\alpha-1}\ds
     <0,
    \end{aligned}
\end{equation}
where we use the form \eqref{eq:akj_rem}  for $ a_{j}^{(k)}$ and the form \eqref{eq:akj1_rem}  for $ a_{j+1}^{(k)}$.
Therefore (P3) holds.
Moreover, for any fixed $s$,
$
  (t_k-t_{j-1})^{-\alpha},~(t_k-t_{j-1}-s\tau_{j})^{-\alpha-1},~(t_k-t_j+s\tau_{j})^{-\alpha-1} ~ \mbox{and}~(t_k-t_j-s\tau_{j+1})^{-\alpha-1}
$
all decrease w.r.t. $k$. As a consequence, \eqref{eq:akj1_rem} and \eqref{ineq:akjdiff} result in 
$
 a_{j}^{(k+1)}- a_{j}^{(k)}>0,~ ( a_{j+1}^{(k+1)}- a_{j}^{(k+1)})- ( a_{j+1}^{(k)}- a_{j}^{(k)}) >0,
$
i.e., the properties (P2) and (P4) hold.

We now turn to prove the properties of $ c_{j}^{(k)}$ and $ d_{j}^{(k)}=  c_{j-1}^{(k)}- a_{j}^{(k)}$. For
$ c_{j}^{(k)}$ in \eqref{eq:aj}, we have 
\begin{equation}\label{eq:ckj_rem}
\begin{aligned}
  c_{j}^{(k)}  
   &=\frac{\tau_j^2}{\tau_{j+1}(\tau_{j}+\tau_{j+1})}\int_0^1(t_k-(t_{j-1}+s \tau_j))^{-\alpha}\ {\rm d}(s^2-s)\\
  &=\frac{\alpha\tau_{j}^3}{\tau_{j+1}(\tau_{j}+\tau_{j+1})}\int_0^{1} s(1-s)(t_k-t_j+s\tau_{j})^{-\alpha-1}\  \ds >0.
  \end{aligned}
\end{equation}
This is the property (P5).
Since $ a_{j}^{(k)}<0$, we have $ d_{j}^{(k)}= c_{j-1}^{(k)}- a_{j}^{(k)}>0$ for $j\geq 2$ and the property (P7) holds.
For any fixed $s$,
$(t_k-t_j+s\tau_{j})^{-\alpha-1}$
decreases w.r.t. $k$, implying that
$ c_{j}^{(k+1)}- c_{j}^{(k)}<0$,
i.e., the property (P6).
Combining this with property (P2), the property (P8) holds.

We now prove the property (P9).
Combining \eqref{ineq:akjdiff} and \eqref{eq:ckj_rem} gives 
\begin{align}\label{ineq:dkjdiff}
     d_{j+1}^{(k)}- d_{j}^{(k)}
    =&\frac{\alpha\tau_{j}^3}{\tau_{j+1}(\tau_{j}+\tau_{j+1})}\int_0^{1} s(1-s)(t_k-t_j+s\tau_{j})^{-\alpha-1}\  \ds\\
    &-\frac{\alpha\tau_{j-1}^3}{\tau_{j}(\tau_{j-1}+\tau_{j})}\int_0^{1} s(1-s)(t_k-t_{j-1}+s\tau_{j-1})^{-\alpha-1}\  \ds\nonumber\\
    &+\frac{\alpha\tau_j}{\tau_j+\tau_{j+1}}
    \int_0^{1} (\tau_j+\tau_{j+1}+s\tau_j)
    (1-s)(t_k-t_{j}+s\tau_j)^{-\alpha-1}\,\ds
   \nonumber \\
    &+\frac{\alpha\tau_{j+1}}{\tau_{j+1}+\tau_{j+2}}
     \int_0^{1}(\tau_{j+1}+\tau_{j+2}-s\tau_{j+1}) (1-s)(t_k-t_{j}-s\tau_{j+1})^{-\alpha-1}\,\ds.\nonumber
  \end{align}
Note that for any fixed $j$,
$(t_k-t_{j}+s\tau_j)^{-\alpha-1}\ge 0$
decreases w.r.t. $s$,
and 
$\int_0^1(1-3s)(1-s)\ds= 0$,
which imply
\begin{equation}\label{3.17}
\begin{aligned}
    &\int_0^{1} (\tau_j+\tau_{j+1}+s\tau_j)
    (1-s)(t_k-t_{j}+s\tau_j)^{-\alpha-1}\,\ds\\
    &\geq    \int_0^{1} (4\tau_j+3\tau_{j+1})s
    (1-s)(t_k-t_{j}+s\tau_j)^{-\alpha-1}\,\ds.
\end{aligned}
\end{equation}
Using \eqref{3.17} and the fact
$
    (t_k-t_j+s\tau_{j})^{-\alpha-1}>(t_k-t_{j-1}+s\tau_{j-1})^{-\alpha-1},
$
we can derive from \eqref{ineq:dkjdiff} that
\begin{equation}\label{eq:3.19}
\begin{aligned}
         d_{j+1}^{(k)}- d_{j}^{(k)}
        >&\alpha\bigg(\frac{\tau_{j}^3}{\tau_{j+1}(\tau_{j}+\tau_{j+1})}-\frac{\tau_{j-1}^3}{\tau_{j}(\tau_{j-1}+\tau_{j})}\\
        &\quad+\frac{(4\tau_j+3\tau_{j+1})\tau_j}{\tau_j+\tau_{j+1}}\bigg)\int_0^{1} s(1-s)(t_k-t_j+s\tau_{j})^{-\alpha-1}\  \ds.
\end{aligned}
\end{equation}
The property (P9) holds if the following condition is satisfied
\begin{equation*}
\small
\begin{aligned}
  &\frac{\tau_{j}^3}{\tau_{j+1}(\tau_{j}+\tau_{j+1})}-\frac{\tau_{j-1}^3}{\tau_{j}(\tau_{j-1}+\tau_{j})}+\frac{(4\tau_j+3\tau_{j+1})\tau_j}{\tau_j+\tau_{j+1}}\ge 0\\
  \iff &\frac{1}{\rho_{j+1}(1+\rho_{j+1})}-\frac{1}{\rho_{j}^2(1+\rho_{j})}+\frac{4+3\rho_{j+1}}{1+\rho_{j+1}}\ge0 \iff\frac{1}{\rho_{j+1}}\ge\frac{1}{\rho_{j}^2(1+\rho_{j})}-3.
\end{aligned}
\end{equation*}
We now prove the last property (P10).
The convexity of the function $t^{-\alpha-1}$ gives
\begin{equation*}
\begin{aligned}
    & (t_k-t_{j}+s\tau_{j})^{-\alpha-1}-(t_{k+1}-t_{j}+s\tau_{j})^{-\alpha-1}\\
    & >(t_k-t_{j-1}+s\tau_{j-1})^{-\alpha-1}-(t_{k+1}-t_{j-1}+s\tau_{j-1})^{-\alpha-1},
\end{aligned}
\end{equation*}
and for fixed $j$, it is  not difficult to see that
$
    (t_k-t_{j}+s\tau_{j})^{-\alpha-1}-(t_{k+1}-t_{j}+s\tau_{j})^{-\alpha-1}>0
$
decreases w.r.t. $s$.
Then we can get  the following result similar to \eqref{eq:3.19}:
\begin{equation*}
\small
\begin{aligned}
    &( d_{j+1}^{(k)}- d_{j}^{(k)})-( d_{j+1}^{(k+1)}- d_{j}^{(k+1)} )
    >\alpha\bigg(\frac{\tau_{j}^3}{\tau_{j+1}(\tau_{j}+\tau_{j+1})}-\frac{\tau_{j-1}^3}{\tau_{j}(\tau_{j-1}+\tau_{j})}
    \\
    &\quad+\frac{(4\tau_j+3\tau_{j+1})\tau_j}{\tau_j+\tau_{j+1}}\bigg)
    \int_0^{1} s(1-s)
     \bigg[(t_k-t_{j}+s\tau_{j})^{-\alpha-1}-(t_{k+1}-t_{j}+s\tau_{j})^{-\alpha-1}\bigg]\, \ds.
     \end{aligned}
\end{equation*}
Similar to the proof of (P9), 
$
    ( d_{j+1}^{(k)}- d_{j}^{(k)})-( d_{j+1}^{(k+1)}- d_{j}^{(k+1)} )>0,
$
as soon as the condition \eqref{condition:rho} is satisfied.
Therefore, (P10) is proved.
\end{proof}

\begin{theorem}\label{thm1}
Consider a nonuniform mesh $\{\tau_k\}_{k\geq 1}$ satisfying that 
\begin{equation}\label{thm1cond1}
 \begin{aligned}
 & \rho_*<\rho_2,\quad \rho_*<\rho_3<\rho^*, \quad 2+\frac{2}{1+\rho_3}+\frac{4\rho_2}{1+\rho_2}-\frac{\rho_2^3}{(1+\rho_2)^2}\ge 0,\\
 &\rho_{3} \le  \frac{\rho_2^2(1+\rho_2)}{1-3\rho_2^2(1+\rho_2)},
\end{aligned}
 \end{equation}
and for $k\geq 3$,
\begin{equation} \label{thm1cond2}
\left\{
\begin{aligned}
        &\rho_*<\rho_{k+1} \le  \frac{\rho_k^2(1+\rho_k)}{1-3\rho_k^2(1+\rho_k)},&&\text{if } \rho_* <\rho_k < \xi_1,\\
      &\rho_*<\rho_{k+1}<\rho^*,&&\text{if }  \xi_1 \le\rho_k \le\xi_2,\\
      &\rho_*<\rho_{k+1}\le \frac{-\rho_k^2+4\rho_k+2}{\rho_k^2-3\rho_k-1}, &&\text{if } \xi_2< \rho_k<\rho^*,
\end{aligned}
\right.
\end{equation}
where 
$\rho_*\approx 0.356341$  is the positive root of $\rho(1+\rho)=1-3\rho^2(1+\rho)$,
$
    \rho^*\approx4.155358
$
is defined in \eqref{eq:rho^*},
$
    \xi_1\approx0.459770
$
is the positive root of 
$
    \frac{\rho^2(1+\rho)}{1-3\rho^2(1+\rho)}=\rho^*
$
and
$
    \xi_2\approx3.532016
$
is the positive root of 
$
    \frac{-\rho^2+4\rho+2}{\rho^2-3\rho-1}=\rho^*.
$
Then  for any function $u$ defined on $[0,\infty)\times\Omega$ and $n\geq 2$,
\begin{equation*}\label{eq:PD}
    \mathcal B_n(u,u) = \sum_{k=1}^{n}\langle L_k^\alpha u, \delta_k u\rangle\geq \sum_{k=1}^{n} \frac{g_k(\alpha)}{2\Gamma(3-\alpha)} \|\delta_k u\|^2_{L^2(\Omega)}\geq 0,
\end{equation*}
where 
\begin{equation} \label{eq:gk}
\footnotesize
g_k(\alpha)= \\
\left\{
\begin{aligned}
& \frac{1}{\tau_1^\alpha} \hat{g}(\alpha), &&k=1,2,\\
       & \frac{\alpha}{\tau_k^{\alpha}}\left(\frac{(1+\alpha)(2-\alpha)}{\alpha}
    +\frac{\rho_{k+1}-1+\alpha}{(1+\rho_{k+1})^{\alpha}}
    -\frac{2\rho_{k+1}^{2-\alpha}}{1+\rho_{k+1}}-\frac{\rho_k(\rho_k-2)}{1+\rho_k}\right), &&3\le k\le n-1,\\
    &\frac{\alpha}{\tau_n^{\alpha}}\left(\frac{(1+\alpha)(2-\alpha)}{\alpha} -\frac{\rho_n(\rho_n-2)}{1+\rho_n}\right),&&k=n\neq 2,
\end{aligned}
\right.
\end{equation}
are positive for all $\alpha \in (0,1)$ and $\hat g(\alpha)$ is defined in \eqref{ineq:ghat} depending only on $\alpha,~\rho_2,~\rho_3$.
\end{theorem}
 \vspace{-0.1in}
\begin{figure}[!ht]
    \centering
    \includegraphics[trim={1.5in 0.3in 1.5in 0.8in},clip,width=0.64\textwidth]{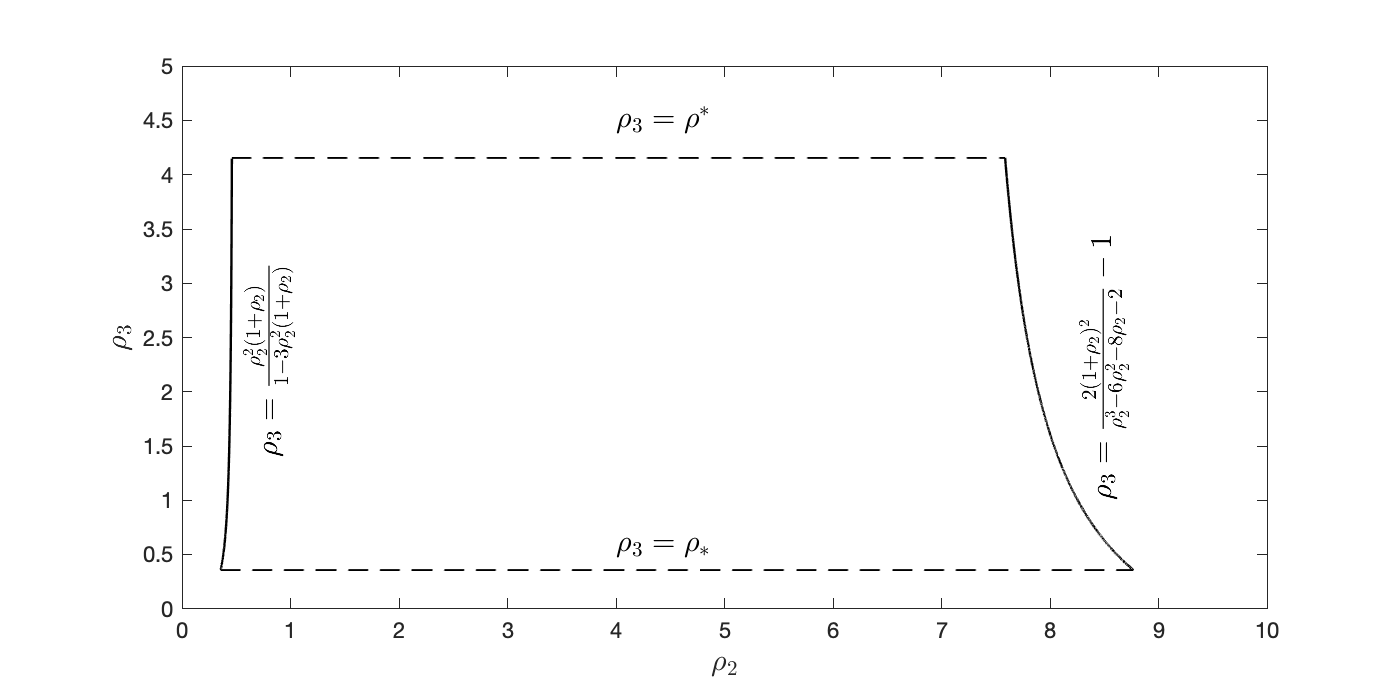}
    \includegraphics[trim={0.5in 0 1.in 0.6in},clip,width=0.35\textwidth]{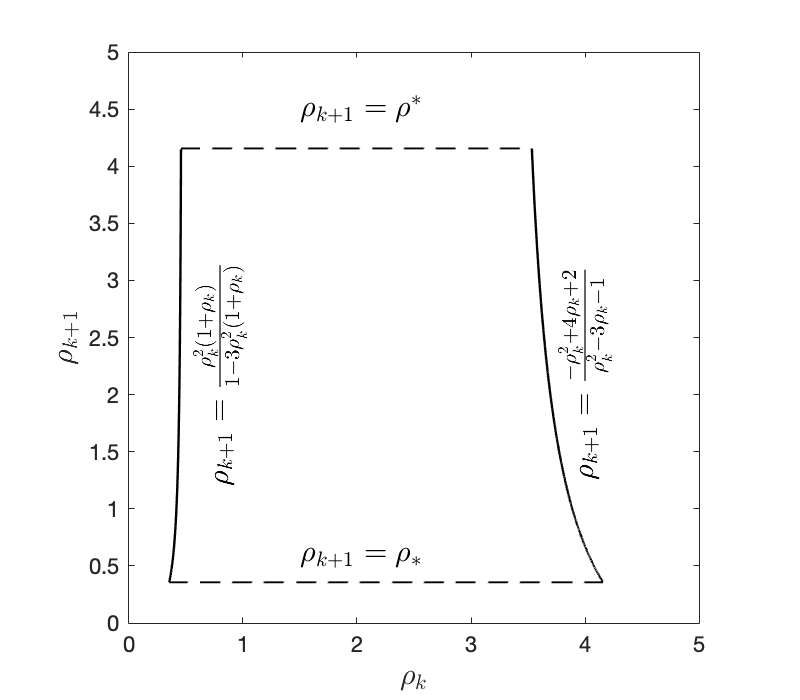}
    \vspace{-0.2in}
    \caption{Feasible regions from restriction \eqref{thm1cond1} for $(\rho_2,\rho_3)$ (left) and restriction \eqref{thm1cond2} for $(\rho_k,\rho_{k+1})$ with $k\geq 3$ (right).}
    \label{fig:rhok}
\end{figure}
\begin{proof}
The graphical illustration of restrictions \eqref{thm1cond1} and \eqref{thm1cond2} are provided in Figure \ref{fig:rhok}.
According to \eqref{eq:Lk1}, we can rewrite $\mathcal B_n(u,u)$ in the following matrix form
\begin{equation*}
    \mathcal B_n(u,u) = \sum_{k=1}^{n}\langle L_k^\alpha u, \delta_k u\rangle=\frac{1}{\Gamma(1-\alpha)}\int_{\Omega}\Psi \mathbf M \Psi^{\mathrm{T}}\dx,
\end{equation*}
where 
$
    \Psi=[\delta_{1} u,\delta_2 u,\cdots,\delta_n u]
$
and
\begin{equation}\label{matM}
\footnotesize\mathbf M=
\begin{pmatrix}
(1-\alpha)^{-1}\tau_1^{-\alpha} & \\
-a_1^{(2)}-a_2^{(2)}& c^{(2)}_1+c^{(2)}_2 \\
-a_1^{(3)}& d^{(3)}_2-a_3^{(3)}& c^{(3)}_2+c^{(3)}_3 \\
-a_1^{(4)} & d^{(4)}_2 & d_3^{(4)}-a_4^{(4)} & c^{(4)}_3+c^{(4)}_4\\
\vdots& \vdots & \ddots &\ddots & \ddots\\
-a_1^{(n)}& d^{(n)}_2&\cdots& d^{(n)}_{n-2}& d^{(n)}_{n-1}-a^{(n)}_{n}&c^{(n)}_{n-1}+c^{(n)}_n 
\end{pmatrix}.
\end{equation}
We split $\mathbf M$ as $\mathbf M = \mathbf A+\mathbf B$, where

\begin{equation*}
  \footnotesize  \mathbf A=
\begin{pmatrix}
\beta_1 & \\
-a_1^{(2)}& \beta_2  \\
-a_1^{(3)} & d_2^{(3)}& \beta_3 \\
\vdots& \vdots &\ddots & \ddots\\
-a_1^{(n)}& d_2^{(n)}&\cdots& d^{(n)}_{n-1}& \beta_n
\end{pmatrix},
\end{equation*}
and
\begin{equation*}
\footnotesize    \mathbf B=
\begin{pmatrix}
(1-\alpha)^{-1}\tau_1^{-\alpha}-\beta_1& \\
-a_2^{(2)} &  c_1^{(2)}+c_2^{(2)}-\beta_2\\
 & -a_3^{(3)}& c_2^{(3)}+c_3^{(3)}-\beta_3\\
&  &  \ddots & \ddots\\
& & &  -a_n^{(n)}& c^{(n)}_{n-1}+c^{(n)}_n-\beta_n
\end{pmatrix},
\end{equation*}
with
\begin{equation}\label{betak}
\begin{aligned}
    &2\beta_1= -a_1^{(2)},\quad 2\beta_2 -d_2^{(3)}=a_1^{(3)}-a_1^{(2)},\\
    &2\beta_k -d_k^{(k+1)}=d_{k-1}^{(k)}-d_{k-1}^{(k+1)},\quad 3\leq k\leq n-1,\\
    &2\beta_n=d_{n-1}^{(n)},\quad n\geq 3.
\end{aligned}
\end{equation}

Consider the following symmetric matrix 
$
    \mathbf S = \mathbf A+\mathbf A^{\mathrm T}+\varepsilon \mathbf e_n^{\rm T}\mathbf e_n,
$
with small constant $\varepsilon>0$ and $\mathbf e_n = (0,\cdots,0,1)\in \mathbb R^{1\times n}$.
According to Lemma \ref{lemmapro}, if the condition \eqref{condition:rho} holds,  $\mathbf S$ satisfies the following three properties:
\begin{itemize}\label{item:1}
\item[{\rm (1)}] $\forall\; 1\leq j < i \leq n$, $\left[ \mathbf S \right]_{i-1,j}\geq \left[ \mathbf S \right]_{i, j}$;
\item[{\rm (2)}] $\forall\; 1 < j \leq i \leq n$, $\left[ \mathbf S \right]_{i, j-1}< \left[ \mathbf S \right]_{i, j}$;
\item[{\rm (3)}] $\forall \;1< j < i \leq n$, $\left[ \mathbf S \right]_{i-1, j-1} - \left[ \mathbf S \right]_{i, j-1}\leq \left[ \mathbf S \right]_{i-1, j} - \left[ \mathbf S \right]_{i, j}$.
\end{itemize}
From \cite[Lemma 2.1]{CSIAM-AM-1-478}, $\mathbf S$ is positive definite. 
Let $\varepsilon \rightarrow 0$. 
We can claim that
$\mathbf A+\mathbf A^{\mathrm T}$
is positive semidefinite. 

We now consider the following splitting of $\mathbf B+\mathbf B^{\mathrm{T}}$:
{
\begin{equation*}
    \mathbf B + {\mathbf B}^{\rm T}=
    \begin{pmatrix}
\mathbf C& \mathbf 0\\
\mathbf 0 &  \mathbf 0\\
\end{pmatrix}_{ n\times n}+
\begin{pmatrix}
0& \mathbf 0\\
\mathbf 0 &  \mathbf D\\
\end{pmatrix}_{n\times n},
\end{equation*}}
where
\begin{equation*}
\small
\mathbf C=\begin{pmatrix}
2(1-\alpha)^{-1}\tau_1^{-\alpha}-2\beta_1& -a_2^{(2)}\\
-a_2^{(2)} &  2c_1^{(2)}+2c_2^{(2)}-2\beta_2-a_3^{(3)}\\
\end{pmatrix}_{2\times2},
\end{equation*}
\begin{equation*}
\small
\begin{aligned}
    &\mathbf D=
 \begin{pmatrix}
 a^{(3)}_3&-a_3^{(3)}\\
  -a_3^{(3)}& 2c_2^{(3)}+2c_3^{(3)}-2\beta_3&-a^{(4)}_4\\
  &  \ddots & \ddots&\ddots\\
 & &  -a_n^{(n)}& 2c^{(n)}_{n-1}+2c^{(n)}_n-2\beta_n
\end{pmatrix}_{(n-1)\times (n-1)}.
\end{aligned}
\end{equation*}
The positive semidefiniteness of $\mathbf B+\mathbf B^{\mathrm{T}}$ can be ensured if $\mathbf C$ and $\mathbf D$ are both positive semidefinite. 

We first discuss about the positive semidefiniteness of $\mathbf C$ of size $2\times2$.
Note that from \eqref{eq:akj}, we have the following explicit expression of $a_1^{(2)}$:
\begin{equation}\label{a1122}
\begin{aligned}
   a_1^{(2)}
   & = \frac{\tau_{2}^{2-\alpha}}{(1-\alpha)\tau_1(\tau_1+\tau_{2})}-\frac{(2\tau_{1}+\tau_{2})(\tau_{1}+\tau_{2})^{1-\alpha}}{(1-\alpha)\tau_1(\tau_1+\tau_{2})} +\frac{2\left[(\tau_{1}+\tau_{2})^{2-\alpha}-\tau_2^{2-\alpha}\right]}{(2-\alpha)(1-\alpha)\tau_1(\tau_1+\tau_{2})}\\
   &=\frac{\alpha(\tau_{1}+\tau_{2})^{2-\alpha}-\alpha\tau_2^{2-\alpha}}{(2-\alpha)(1-\alpha)\tau_1(\tau_1+\tau_{2})}-(1-\alpha)^{-1}(\tau_1+\tau_2)^{-\alpha},
  \end{aligned}
\end{equation}
and from \eqref{eq:akj_rem}, we have another formula of $a_1^{(2)}$:
\begin{equation}\label{a11221}
   a_1^{(2)}= -\tau_2^{-\alpha}+\frac{\alpha\tau_1}{\tau_1+\tau_{2}}
    \int_0^{1} (\tau_1+\tau_{2}+s\tau_1)
    (1-s)(\tau_2+s\tau_1)^{-\alpha-1}\,\ds.
\end{equation}
According to the definition $2\beta_1=-a_1^{(2)}$ in \eqref{betak}, if $\tau_1\ge\tau_2$, then by \eqref{a1122},
{ 
\begin{equation}\label{tau21}
\begin{aligned}
    [\mathbf C]_{11}& 
    >2(1-\alpha)^{-1}\tau_1^{-\alpha}-(1-\alpha)^{-1}(\tau_1
    +\tau_2)^{-\alpha} >(1-\alpha)^{-1}\tau_1^{-\alpha}>0,
\end{aligned}
\end{equation}}
while if $\tau_1\le\tau_2$, then by \eqref{a11221}
\begin{equation}\label{tau12}
\begin{aligned}
    [\mathbf C]_{11} 
     =2(1-\alpha)^{-1}\tau_1^{-\alpha}+a_1^{(2)}
    >\frac{1+\alpha}{(1-\alpha)\tau_1^{\alpha}}>0.
\end{aligned}
\end{equation}
From $2\beta_2= d_2^{(3)}+a_1^{(3)}-a_1^{(2)}$ in \eqref{betak}, $d_2^{(3)}=c_1^{(3)}-a_2^{(3)}$ and the properties (P5)--(P6) on $ c_{j}^{(k)}$, we have 
 \begin{equation}\label{c123}
\begin{aligned}
     [\mathbf C]_{22} 
    =&c_1^{(2)}+2c_2^{(2)}+(a_1^{(2)}+a_2^{(3)}-a_1^{(3)})-a_3^{(3)}+(c^{(2)}_1-c_1^{(3)})\\
    >&2c_2^{(2)}+(a_1^{(2)}+a_2^{(3)}-a_1^{(3)})-a_3^{(3)}.
\end{aligned}
\end{equation}
Note that \eqref{eq:akj_rem} and \eqref{ineq:akjdiff} give
\begin{equation}\label{a123}
\begin{aligned}
    a_1^{(2)}+a_2^{(3)}-a_1^{(3)}=&-\tau_2^{-\alpha}+\frac{\alpha\tau_1}{\tau_1+\tau_2}
    \int_0^{1} (\tau_1+\tau_{2}+s\tau_1)
    (1-s)(t_2-t_{1}+s\tau_1)^{-\alpha-1}\,\ds\\
    &-\frac{\alpha\tau_1}{\tau_1+\tau_{2}}
    \int_0^{1} (\tau_1+\tau_{2}+s\tau_1)
    (1-s)(t_3-t_{1}+s\tau_1)^{-\alpha-1}\,\ds
    \\
    &-\frac{\alpha\tau_{2}}{\tau_{2}+\tau_{3}}
     \int_0^{1}(\tau_{2}+\tau_{3}-s\tau_{2}) (1-s)(t_3-t_{1}-s\tau_2)^{-\alpha-1}\,\ds\\
     >&-\tau_2^{-\alpha}-\frac{\alpha\tau_{2}}{\tau_{2}+\tau_{3}}
     \int_0^{1}(1-s)(\tau_{2}+\tau_{3}-s\tau_{2}) ^{-\alpha}\,\ds\\
    =&-\tau_2^{-\alpha}-\frac{\alpha}{(2-\alpha)(1-\alpha)\tau_{2}^\alpha}\left(-\frac{\rho_{3}-1+\alpha}{(1+\rho_{3})^{\alpha}}+\frac{\rho_{3}^{2-\alpha}}{1+\rho_{3}}\right).
\end{aligned}
\end{equation}
Substituting \eqref{eq:a_k1} and \eqref{a123} into \eqref{c123} yields
 \begin{equation}\label{eq:ck2}
\begin{aligned}
     [\mathbf C]_{22} 
     >\frac{\alpha}{(2-\alpha)(1-\alpha)\tau_{2}^{\alpha}}\bigg(
    \frac{(1+\alpha)(2-\alpha)}{\alpha}+\frac{\rho_{3}-1+\alpha}{(1+\rho_{3})^{\alpha}}-\frac{2\rho_{3}^{2-\alpha}}{1+\rho_{3}}+\frac{2\rho_2}{1+\rho_2}\bigg).
\end{aligned}
\end{equation}
Since $[\mathbf C]_{11}>0$, $\mathbf C$ is positive definite as soon as 
$
   [\mathbf C]_{11}  [\mathbf C]_{22} -[\mathbf C]_{12} [\mathbf C]_{21} > 0.
$

When $\tau_1\ge\tau_2$, i.e. $\rho_2\le1$, from \eqref{eq:a_k1}, \eqref{tau21} and \eqref{eq:ck2}, we have
\begin{align*}
     [\mathbf C]_{11}  [\mathbf C]_{22} -[\mathbf C]_{12} [\mathbf C]_{21}
     > \frac{\alpha}{(1-\alpha)^2(2-\alpha)(\tau_1\tau_2)^\alpha} h_1(\alpha),
\end{align*}
where
\begin{align*}
    h_1(\alpha)&=
   \frac{(1+\alpha)(2-\alpha)}{\alpha} +\frac{\rho_{3}-1+\alpha}{(1+\rho_{3})^{\alpha}}
   -\frac{2\rho_{3}^{2-\alpha}}{1+\rho_{3}}+\frac{2\rho_2}{1+\rho_2}-\frac{\alpha}{(2-\alpha)\rho_2^{\alpha}}\left(\frac{\rho_2^2}{1+\rho_2}\right)^2.
  \end{align*}
Now we show that $h_1(\alpha)$ decreases w.r.t. $\alpha$ and $h_1(1)\geq 0$ under some constraints on  $\rho_2$ and $\rho_3$.
It is easy to check that 
$
    -\frac{\alpha}{(2-\alpha)\rho_2^{\alpha}}\left(\frac{\rho_2^2}{1+\rho_2}\right)^2
$
decreases w.r.t. $\alpha$ when $\rho_2\leq 1$. 
Let 
\begin{equation}\label{def:q}
    q(\alpha)=
    \frac{(1+\alpha)(2-\alpha)}{\alpha} +\frac{\rho_{3}-1+\alpha}{(1+\rho_{3})^{\alpha}}
   -\frac{2\rho_{3}^{2-\alpha}}{1+\rho_{3}}+\frac{2\rho_2}{1+\rho_2}.
\end{equation}
A direct calculation gives 
\begin{equation}\label{qprime}
    q'(\alpha) =-2/\alpha^2-1+(1+\rho_{3})^{-\alpha}-\frac{(\rho_{3}-1+\alpha)\ln (1+\rho_{3})}{(1+\rho_{3})^{\alpha}}+\frac{2\rho_{3}^{2-\alpha}\ln (\rho_{3})}{1+\rho_{3}}.
\end{equation}
To show $q'(\alpha) \leq 0$, we consider the following several cases. 
In the case of $0<\rho_3\le1$, we have
\begin{align*}
    q'(\alpha)\le-2/\alpha^2-1+(1+\rho_{3})^{-\alpha}(1-(\rho_3-1+\alpha)\ln(1+\rho_3))
     \le -3+(1+\ln2)\le0.
     \end{align*}
In the case of $1<\rho_3\leq 4.5$, we have
\begin{align*}
    q'(\alpha) \leq -2/\alpha^2-1+2^{-\alpha}+\frac{2\times 4.5^{2-\alpha}\ln (4.5)}{1+4.5}\leq 0.
\end{align*}
So $q(\alpha)$ decreases w.r.t. $\alpha$ for $0<\rho_3\leq 4.5$. As a consequence, $h_1(\alpha)$ decreases w.r.t. $\alpha$ for $0<\rho_3\leq 4.5$. 
Since
    \begin{align*}
             h_1(1)
   =2-\frac{\rho_{3}}{1+\rho_{3}}+\frac{2\rho_2}{1+\rho_2}-\frac{\rho_2^3}{(1+\rho_2)^2}
   =1+\frac{1}{1+\rho_{3}}+\frac{2\rho_2+2\rho_2^2-\rho_2^3}{(1+\rho_2)^2}
   > 0,
    \end{align*}
we know that $\mathbf C$ is positive definite for $\alpha \in (0,1)$ when $\rho_2\leq 1$ and $\rho_3\leq 4.5$.
 
When $\tau_1\le\tau_2$, i.e. $\rho_2\ge1$, from \eqref{eq:a_k1}, \eqref{tau12} and \eqref{eq:ck2}, we have
\begin{align*}
     [\mathbf C]_{11}  [\mathbf C]_{22} -[\mathbf C]_{12} [\mathbf C]_{21}
     > \frac{\alpha^2}{(1-\alpha)^2(2-\alpha)^2\tau_2^{2\alpha}} h_2(\alpha),
\end{align*}
where
\begin{align*}
h_2(\alpha)=&\frac{(1+\alpha)(2-\alpha)\rho_2^\alpha}{\alpha}q(\alpha)
   -\left(\frac{\rho_2^2}{1+\rho_2}\right)^2
 \end{align*}
with $q(\alpha)$ defined in \eqref{def:q}. We want to impose some constraints on $\rho_2$ and $\rho_3$ s.t. $h_2(\alpha)\geq 0$ for $\alpha\in (0,1)$.
 First we have to impose $h_2(1)\geq 0$, i.e.,
 \begin{equation}\label{condc11}
     \rho_2^{-1} h_2(1) = 2+\frac{2}{1+\rho_3}+\frac{4\rho_2}{1+\rho_2}-\frac{\rho_2^3}{(1+\rho_2)^2}\ge 0,
 \end{equation}
 which is equivalent to
 \begin{align}\label{eq:psirho}
     \rho_2^3-\left(6+\frac{2}{1+\rho_3}\right)\rho_2^2-\left(8+\frac{4}{1+\rho_3}\right)\rho_2-\left(2+\frac{2}{1+\rho_3}\right)\leq 0.
 \end{align}
 Solving this cubic inequality yields
 \begin{equation}\label{eq:rhobar}
 \begin{aligned}
     0<\rho_2 \leq \psi(\rho_3),
\end{aligned}
 \end{equation}
 where $\psi(\rho_3)$ is the unique positive root of the left-hand side of \eqref{eq:psirho}.
 Next, we show that under the the constraint \eqref{condc11}, $h_2'(\alpha)\leq 0$ holds, so that $h_2(\alpha)\geq 0$ for $\alpha\in (0,1)$.
 Note that \eqref{condc11} indicates
\begin{equation}\label{eq:3.70}
    \frac{\rho_2^3}{(1+\rho_2)^2}-\frac{4\rho_2}{1+\rho_2}\leq 2+\frac{2}{1+\rho_3}< 4 \quad \Rightarrow \quad\rho_2< 9.331852.
\end{equation}
A direct computation gives
 $ h_2'(\alpha)=\rho_2^{\alpha} q(\alpha) p(\alpha)$,
 where $q(\alpha)$ is defined in \eqref{def:q} and
 \begin{equation*}
     p(\alpha) = -2/ \alpha^2-1+\ln {\rho_2}\frac{(1+\alpha)(2-\alpha)}{\alpha}+ \frac{(1+\alpha)(2-\alpha) q'(\alpha)}{\alpha q(\alpha)}.
 \end{equation*}
Recall that $ q'(\alpha)\leq 0$ for $\rho_2> 0$, $0<\rho_3\leq 4.5$, implying that $q(\alpha)\ge q(1) > 0$. 
We now prove that $p(\alpha)\leq 0$ for any $\alpha\in(0,1)$,  $0.3<\rho_3\leq 4.5$ and  $0<\rho_2 \le\psi(\rho_3)$.
The following three cases are discussed. \\
{\bf Case 1:} $0< \alpha \le 0.43$. We have the following estimate
  \begin{align*}
      p(\alpha) \leq &-2/ \alpha^2-1+\ln 9.4\frac{(1+\alpha)(2-\alpha)}{\alpha} \le0,\quad 0<\rho_2\leq 9.4,~0<\rho_3\leq 4.5,
  \end{align*}
where we use the inequality \eqref{eq:3.70}. \\
{\bf Case 2:} $0.43< \alpha\leq 0.7$.
In this case, for any $1\leq\rho_2<9.4$,
  \begin{equation*}
  \begin{aligned}
     & \frac{\rm d}{{\rm d}\alpha}\left(-2/ \alpha^2-1+\ln {\rho_2}\frac{(1+\alpha)(2-\alpha)}{\alpha}\right)
= 4/ \alpha^3+\ln {\rho_2}(-2/ \alpha^2-1) \\
&>4/ \alpha^3+\ln {9.4}(-2/ \alpha^2-1)
      \geq  4/\alpha^3-4.5/\alpha^2-2.25\geq (4/0.7-4.5)/0.7^2-2.25\geq 0.
      \end{aligned}
  \end{equation*}
For any interval $(b,a]\subset (0.43,0.7]$ and $\alpha\in(b,a]$, we have the following upper bound
\begin{equation}\label{palpha1}
   \begin{aligned}  
   p(\alpha)\le&\left(-2/ a^2-1+\ln {\rho_2}\frac{(1+a)(2-a)}{a}\right)
   +\frac{(1+a)(2-a)}{a q(b)}\bigg(-2/a^2-1\\
   &+(1+\rho_{3})^{-b}
   -\frac{\rho_{3}\ln (1+\rho_{3})}{(1+\rho_{3})^{a}}+\frac{(1-b)\ln (1+\rho_{3})}{(1+\rho_{3})^{b}}+\frac{2\rho_{3}^{2-b}}{1+\rho_{3}}\ln (\rho_{3})\bigg)\\
   =:& \varphi_1(\rho_2,\rho_3)\le\varphi_1(\psi(\rho_3),\rho_3) .
   \end{aligned}
   \end{equation}
Here, $\psi(\rho_3)$ is defined in \eqref{eq:rhobar} and we use the fact that $\varphi_1(\rho_2,\rho_3)$ increases w.r.t. $\rho_2$.
We separate $(0.43,0.7]$ into $(0.43,0.6]$ and $(0.6,0.7]$, and plot the upper bounds according to \eqref{palpha1} on these two small intervals respectively (see the left-hand side of Figure \ref{fig:p}). Both upper bounds are smaller than $0$. So $p(\alpha)\leq 0$ for $0.43<\alpha\leq  0.7,~0.3<\rho_3\leq 4.5,~0<\rho_2\leq \psi(\rho_3)$.\\
{\bf Case 3:} $0.7< \alpha<1$.
For any interval $(b,a]\subset (0.7,1]$ and $\alpha\in(b,a]$,
\begin{equation}\label{palpha2}
   \begin{aligned}  
   p(\alpha)\le&\left(-2/ a^2-1+\ln {\rho_2}\frac{(1+b)(2-b)}{b}\right)
   +\frac{(1+a)(2-a)}{a q(b)}\bigg(-2/a^2-1\\
   &+(1+\rho_{3})^{-b}
   -\frac{\rho_{3}\ln (1+\rho_{3})}{(1+\rho_{3})^{a}}+\frac{(1-b)\ln (1+\rho_{3})}{(1+\rho_{3})^{b}}+\frac{2\rho_{3}^{2-b}}{1+\rho_{3}}\ln (\rho_{3})\bigg)\\
   =:&  \varphi_2(\rho_2,\rho_3)\le\varphi_2(\psi(\rho_3),\rho_3).
   \end{aligned}
   \end{equation}
   Here, $\psi(\rho_3)$ is defined in \eqref{eq:rhobar} and we use the fact that $\varphi_2(\rho_2,\rho_3)$ increases w.r.t. $\rho_2$.
 We separate $(0.7,1]$ into small intervals and plot the upper bounds according to \eqref{palpha2} on all these small intervals respectively (see the 
right-hand side of Figure \ref{fig:p}). All these upper bounds are smaller than $0$. So $p(\alpha)\leq 0$ for $0.7<\alpha\leq  1,~0.3<\rho_3\leq 4.5,~0<\rho_2\leq \psi(\rho_3)$.
 \begin{figure}
    \centering
    \includegraphics[trim={0 0in 0 .5in},clip,width=0.95\textwidth]{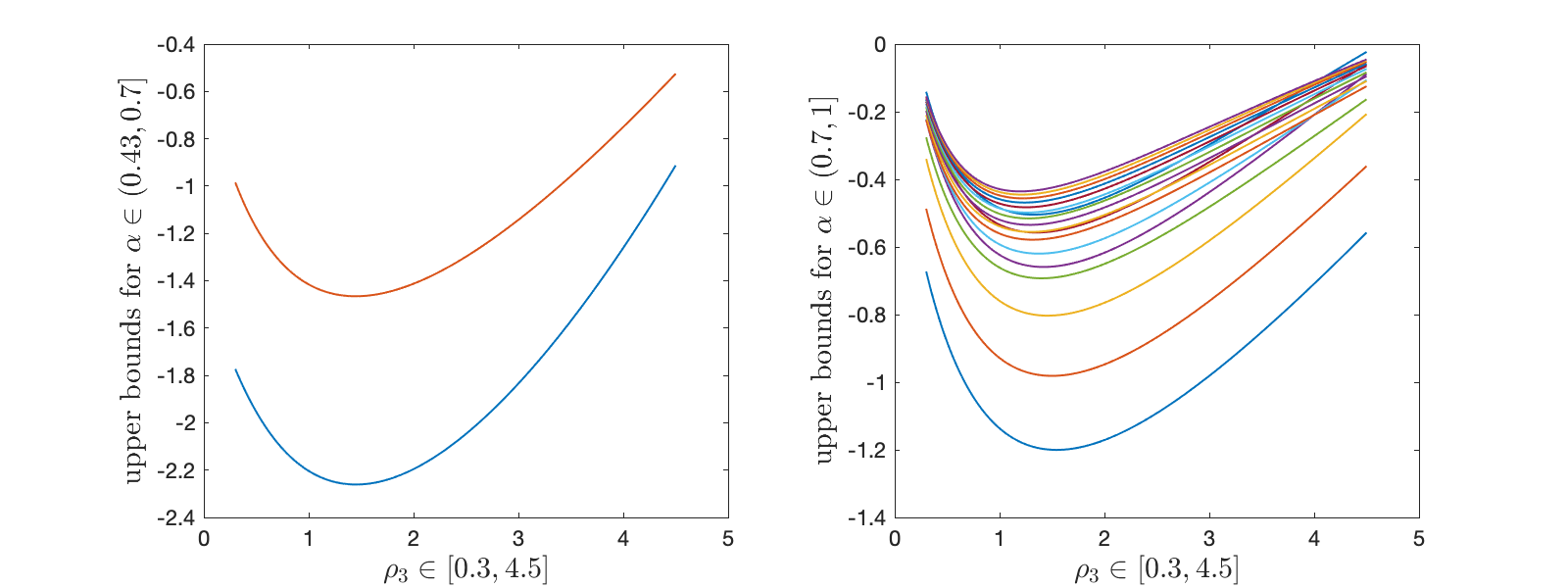}
    \caption{Left: Upper bounds of $p(\alpha)$ for $(0.43,0.6]$, $(0.6,0.7]$ from \eqref{palpha1}; Right: Upper bounds of $p(\alpha)$ for  $(0.7,0.73]$, $(0.73,0.76]$, $(0.76,0.79]$, $(0.79,0.82]$,  $(0.82,0.84]$,  $(0.84,0.86]$, $(0.86,0.88]$, $(0.88,0.9]$, $(0.9,0.91]$,  $(0.91,0.92]$, $(0.92,0.93]$, $(0.93,0.94]$,  $(0.94,0.95]$, $(0.95,0.96]$,   $(0.96,0.97]$, $(0.97,0.98]$, $(0.98,0.99]$, $(0.99,1]$ from \eqref{palpha2}.}
    \label{fig:p}
\end{figure}
Combining {\bf Case 1--3},  we derive that $\mathbf C$ is positive definite for $\alpha \in (0,1)$ when $\rho_2 \geq 1$, $0.3\leq \rho_3\leq 4.5$ and \eqref{condc11} is satisfied.

Combining all above discussions for $0<\rho_2\le 1$ and $\rho_2\ge1$, we claim that if $0.3\leq \rho_3\leq 4.5$ and \eqref{condc11} is satisfied, then $\mathbf C$ is positive definite. Moreover, the eigenvalues of  $\mathbf C$ are
\begin{equation}\label{ineq:ghat}
\begin{aligned}
     \lambda_{1,2}&=\frac{[\mathbf C]_{11}+[\mathbf C]_{22}\pm \sqrt{([\mathbf C]_{11}+[\mathbf C]_{22})^2-4([\mathbf C]_{11}[\mathbf C]_{22}-[\mathbf C]_{12}[\mathbf C]_{21})}}{2}\\
     &\ge\frac{[\mathbf C]_{11}+[\mathbf C]_{22}- \sqrt{([\mathbf C]_{11}-[\mathbf C]_{22})^2+4 [\mathbf C]_{12}[\mathbf C]_{21}}}{2}\\
     &=: (2-\alpha)^{-1}(1-\alpha)^{-1}\tau_1^{-\alpha}\hat g(\alpha),
\end{aligned}
\end{equation}
where $\hat g(\alpha)>0$ depends only on $\alpha, ~\rho_2,~\rho_3$.

We have studied the positive definiteness of $\mathbf C$ and now turn to analyze the positive semidefiniteness of $\mathbf D$.
We aim to show that $\mathbf D$ is diagonally dominant under some constraints on $\rho_k$, so that the positive semidefiniteness can be  guaranteed. 
  
For $3\le k \le n-1$,  we show $2 c_{k-1}^{(k)}+2c_{k}^{(k)}-2\beta_k-a_k^{(k)}-a_{k+1}^{(k+1)}
  \ge 0$ under some constraints on $\rho_k$ and $\rho_{k+1}$. 
From the definition $2\beta_k=d_k^{(k+1)}+d_{k-1}^{(k)}-d_{k-1}^{(k+1)}$ in \eqref{betak} and $ d_{j}^{(k)}=c_{j-1}^{(k)}- a_{j}^{(k)}$, we have
\begin{equation} \label{eq:kn}
\footnotesize
\begin{aligned}
     &2 c_{k-1}^{(k)}+2c_{k}^{(k)}-2\beta_k-a_k^{(k)}-a_{k+1}^{(k+1)} \\
    =& 2 c_{k-1}^{(k)}+2c_{k}^{(k)}-d_k^{(k+1)}-d_{k-1}^{(k)}+d_{k-1}^{(k+1)}-a_k^{(k)}-a_{k+1}^{(k+1)}\\
    =&  c_{k-1}^{(k)}+2c_{k}^{(k)}+[(c_{k-1}^{(k)}-c_{k-1}^{(k+1)})-(c_{k-2}^{(k)}-c^{(k+1)}_{k-2} )+(a^{(k+1)}_{k}-a^{(k+1)}_{k-1})+a^{(k)}_{k-1}]-a_k^{(k)}-a_{k+1}^{(k+1)}.
\end{aligned}
\end{equation}
From  \eqref{eq:ckj_rem},  \eqref{ineq:akjdiff} and \eqref{eq:akj_rem}, we have
{\footnotesize
\begin{align}\label{eq:3.80-1}
    &(c_{k-1}^{(k)}-c_{k-1}^{(k+1)})-(c_{k-2}^{(k)}-c^{(k+1)}_{k-2} )+(a^{(k+1)}_{k}-a^{(k+1)}_{k-1})+a^{(k)}_{k-1}\\
    =&\frac{\alpha\tau_{k-1}^3}{\tau_{k}(\tau_{k-1}+\tau_{k})}\int_0^{1} s(1-s)\bigg[(t_k-t_{k-1}+s\tau_{k-1})^{-\alpha-1}
     -(t_{k+1}-t_{k-1}+s\tau_{k-1})^{-\alpha-1}\bigg]\  \ds\nonumber\\
   &-\frac{\alpha\tau_{k-2}^3}{\tau_{k-1}(\tau_{k-2}+\tau_{k-1})}\int_0^{1} s(1-s)\bigg[(t_k-t_{k-2}+s\tau_{k-2})^{-\alpha-1}-(t_{k+1}-t_{k-2}+s\tau_{k-2})^{-\alpha-1}\bigg]\  \ds\nonumber\\
   &+\frac{\alpha\tau_{k-1}}{\tau_{k-1}+\tau_{k}}
    \int_0^{1} (\tau_{k-1}+\tau_{k}+s\tau_{k-1})
    (1-s)\bigg[(t_k-t_{k-1}+s\tau_{k-1})^{-\alpha-1}\nonumber\\
    &\quad-(t_{k+1}-t_{k-1}+s\tau_{k-1})^{-\alpha-1}\bigg]\,\ds-\tau_k^{-\alpha}-\frac{\alpha\tau_{k}}{\tau_{k}+\tau_{k+1}}
     \int_0^{1} (1-s)(\tau_{k}+\tau_{k+1}-s\tau_{k})^{-\alpha}\,\ds\nonumber\\
   >&-\tau_k^{-\alpha}-\frac{\alpha\tau_{k}}{\tau_{k}+\tau_{k+1}}
     \int_0^{1} (1-s)(\tau_{k}+\tau_{k+1}-s\tau_{k})^{-\alpha}\,\ds\nonumber\\
    =&-\tau_k^{-\alpha}-\frac{\alpha}{(2-\alpha)(1-\alpha)\tau_{k}^{\alpha}}\left(-\frac{\rho_{k+1}-1+\alpha}{(1+\rho_{k+1})^{\alpha}}+\frac{\rho_{k+1}^{2-\alpha}}{1+\rho_{k+1}}\right),\nonumber
    \end{align}}
as soon as \eqref{condition:rho} is satisfied for $j=k-1$.
Here, the inequality in \eqref{eq:3.80-1} is obtained similar to the proof of the property (P10) in Lemma \ref{lemmapro}.
Combining this with  \eqref{eq:a_k1} and \eqref{eq:kn} yields
\begin{align*}
    2 c_{k-1}^{(k)}+2c_{k}^{(k)}-2\beta_k-a_k^{(k)}-a_{k+1}^{(k+1)}
    \ge  c_{k-1}^{(k)}+\frac{\alpha h_3(\alpha)}{(2-\alpha)(1-\alpha)\tau_{k}^{\alpha}}
    ,
\end{align*}
where
\begin{align*}
    h_3(\alpha)= \frac{(1+\alpha)(2-\alpha)}{\alpha} 
    +\frac{\rho_{k+1}-1+\alpha}{(1+\rho_{k+1})^{\alpha}}
    -\frac{2\rho_{k+1}^{2-\alpha}}{1+\rho_{k+1}}-\frac{\rho_k(\rho_k-2)}{1+\rho_k}.
\end{align*}
A direct calculation gives
\begin{equation*}
    h_3'(\alpha) =-2/\alpha^2-1+(1+\rho_{k+1})^{-\alpha}-\frac{(\rho_{k+1}-1+\alpha)\ln (1+\rho_{k+1})}{(1+\rho_{k+1})^{\alpha}}+\frac{2\rho_{k+1}^{2-\alpha}}{1+\rho_{k+1}}\ln (\rho_{k+1}),
\end{equation*}
which is similar to $q'(\alpha)$ in \eqref{qprime} (just replacing $\rho_3$ by $\rho_{k+1}$).
Therefore, we have $ h_3'(\alpha)\le0$ and then $h_3(\alpha)\geq h_3(1)$ when $0<\rho_{k+1}<4.5$.
To ensure 
$
    2 c_{k-1}^{(k)}+2c_{k}^{(k)}-2\beta_k-a_k^{(k)}-a_{k+1}^{(k+1)}
  \ge 0,
$
it is sufficient to impose for $3\le k\le n-1$
\begin{equation}\label{rhokk1}
\begin{aligned}
    \frac{1}{\rho_{k}}\ge\frac{1}{\rho_{k-1}^2(1+\rho_{k-1})}-3,\quad 0<\rho_{k+1}<4.5,\quad h_3(1)=\frac{2+\rho_{k+1}}{1+\rho_{k+1}}-\frac{\rho_k(\rho_k-2)}{1+\rho_k}\ge0.
    \end{aligned}
\end{equation}

Now we show $2c^{(n)}_{n-1}+2c^{(n)}_n-2\beta_n-a_n^{(n)}\ge 0$ under some constraints on $\rho_n$.
From \eqref{betak}, \eqref{eq:a_k1}, \eqref{eq:akj_rem} and  \eqref{eq:ckj_rem}, we can get 
{\small
    \begin{align}\label{ineq:rhon}
    &2c^{(n)}_{n-1}+2c^{(n)}_n-2\beta_n-a_n^{(n)}
    =c^{(n)}_{n-1}+2c^{(n)}_n-a^{(n)}_{n}+c^{(n)}_{n-1}-c^{(n)}_{n-2}+a^{(n)}_{n-1}\\
    =&c^{(n)}_{n-1}+\frac{2}{(1-\alpha)\tau_n^\alpha}+\frac{2\alpha\tau_n}{(2-\alpha)(1-\alpha)(\tau_{n-1}+\tau_{n}) \tau_n^\alpha}-\frac{\alpha\tau_n^2}{(2-\alpha)(1-\alpha)\tau_{n-1}(\tau_{n-1}+\tau_{n}) \tau_n^\alpha}\nonumber\\
    &+\frac{\alpha\tau_{n-1}^3}{\tau_{n}(\tau_{n-1}+\tau_{n})}\int_0^{1} s(1-s)(t_n-t_{n-1}+s\tau_{n-1})^{-\alpha-1}\  \ds\nonumber\\
    &-\frac{\alpha\tau_{n-2}^3}{\tau_{n-1}(\tau_{n-2}+\tau_{n-1})}\int_0^{1} s(1-s)(t_n-t_{n-2}+s\tau_{n-2})^{-\alpha-1}\  \ds\nonumber\\
    & -\tau_n^{-\alpha}+\frac{\alpha\tau_{n-1}}{\tau_{n-1}+\tau_{n}}
    \int_0^{1} (\tau_{n-1}+\tau_{n}+s\tau_{n-1})
    (1-s)(t_n-t_{n-1}+s\tau_{n-1})^{-\alpha-1}\,\ds\nonumber\\
     >&c^{(n)}_{n-1}+\frac{\alpha}{(2-\alpha)(1-\alpha)\tau_{n}^{\alpha}}\bigg(
    (1+\alpha)(2-\alpha)/\alpha
  -\frac{\rho_n(\rho_n-2)}{1+\rho_n}\bigg),\nonumber
    \end{align}}
if \eqref{condition:rho} holds for $j=n-1$.
The proof of the last inequality in \eqref{ineq:rhon} is similar to the proof of property (P9) in Lemma \ref{lemmapro}.
To ensure 
$
    2c^{(n)}_{n-1}+2c^{(n)}_n-2\beta_n-a_n^{(n)}\ge 0,
$
it is sufficient to impose
\begin{align*}
&\frac{1}{\rho_{n}}\ge\frac{1}{\rho_{n-1}^2(1+\rho_{n-1})}-3,\quad   (1+\alpha)(2-\alpha)/\alpha
  -\frac{\rho_n(\rho_n-2)}{1+\rho_n}\ge0, \quad \forall~\alpha\in (0,1),
\end{align*}
that is,
\begin{equation}\label{eq:3.80}
\begin{aligned}
&\frac{1}{\rho_{n}}\ge\frac{1}{\rho_{n-1}^2(1+\rho_{n-1})}-3,\quad \rho_n\le 2+\sqrt{6}.
 \end{aligned}    
\end{equation}
Combining the above discussions on $\mathbf D$, we conclude that if \eqref{rhokk1} and  \eqref{eq:3.80} hold, then $\mathbf D$ is diagonally dominant and positive semidefinite, satisfying 
\begin{equation}
\begin{aligned}
    \mathbf D\geq (2-\alpha)^{-1}(1-\alpha)^{-1}{\rm diag}\left(0,
    g_3(\alpha),\ldots,g_k(\alpha),\ldots,g_{n}(\alpha)\right),
\end{aligned}
\end{equation}
where $g_k(\alpha)$ is given in \eqref{eq:gk}.

We now combine all the conditions for the positive semidefiniteness of $\mathbf A+\mathbf A^{\rm T}$, $\mathbf C$ and $\mathbf D$, so that
\begin{equation}
\begin{aligned}
    \mathbf M+\mathbf M^{\rm T} &= (\mathbf A+\mathbf A^{\rm T})+(\mathbf B+\mathbf B^{\rm T})\geq \mathbf B+\mathbf B^{\rm T}\\
    &\geq (2-\alpha)^{-1}(1-\alpha)^{-1}{\rm diag}\left(g_1(\alpha),g_2(\alpha),\ldots,g_k(\alpha),\ldots,g_{n}(\alpha)\right)
\end{aligned}
\end{equation}
is positive definite,
where $g_k(\alpha)$ is given in \eqref{eq:gk}.
This gives
\begin{equation*}
    \mathcal B_n(u,u) =\frac{1}{\Gamma(1-\alpha)}\int_{\Omega}\Psi \mathbf M \Psi^{\mathrm{T}}\dx\geq \sum_{k=1}^{n} \frac{g_k(\alpha)}{2\Gamma(3-\alpha)} \|\delta_k u\|^2_{L^2(\Omega)}\geq 0.
\end{equation*}
In fact, we have proved the following results:
\begin{itemize}
    \item Positive semidefiniteness of $\mathbf A+\mathbf A^{\rm T}$: \eqref{condition:rho} holds.
    \item Positive definiteness of $\mathbf C$:  $0.3\leq \rho_3\leq 4.5$ and \eqref{condc11} holds.
    \item Positive semidefiniteness of $\mathbf D$: \eqref{rhokk1} holds for $3\leq k\leq n-1$ and  \eqref{eq:3.80} holds for $k=n$.
\end{itemize}

In the following content, we just simplify the above constraints for the positive semidefiniteness of $\mathbf M+\mathbf M^{\rm T}$. 

The condition \eqref{condition:rho} actually says that $(\rho_k,\rho_{k+1})$ lies on the right-hand side of the blue solid curve in Figure \ref{fig:rho}.
Let $\rho_*\approx 0.356341$ be the positive root of $\rho(1+\rho)=1-3\rho^2(1+\rho).$
It can be found that if $\rho_{k}\leq  \rho_*$ for some $k$, then $\rho_*\geq\rho_k\geq \rho_{k+1}\geq\rho_{k+2}\geq \ldots$ and $\tau_k$ will shrink to $0$ quickly as $k$ increases.
This doesn't make sense in practice. We shall impose 
$
    \rho_{k}>\rho_*,~ \forall k\geq 2.
$
As a consequence, we have the following constraints: for $k\geq 2$,
\begin{equation}\label{condrhorho}
\left\{
\begin{aligned}
     &\rho_*<\rho_{k+1} \leq  \frac{\rho_k^2(1+\rho_k)}{1-3\rho_k^2(1+\rho_k)},&&  \rho_*<\rho_k< \eta_1,\\
      &\rho_*<\rho_{k+1},&& \eta_1\le \rho_k,
\end{aligned}
\right.
\end{equation}
where 
$ \eta_1\approx0.475329$
be the unique positive root of 
$1-3\rho^2(1+\rho)=0.$
 \begin{figure}
    \centering
    \includegraphics[width=0.5\textwidth]{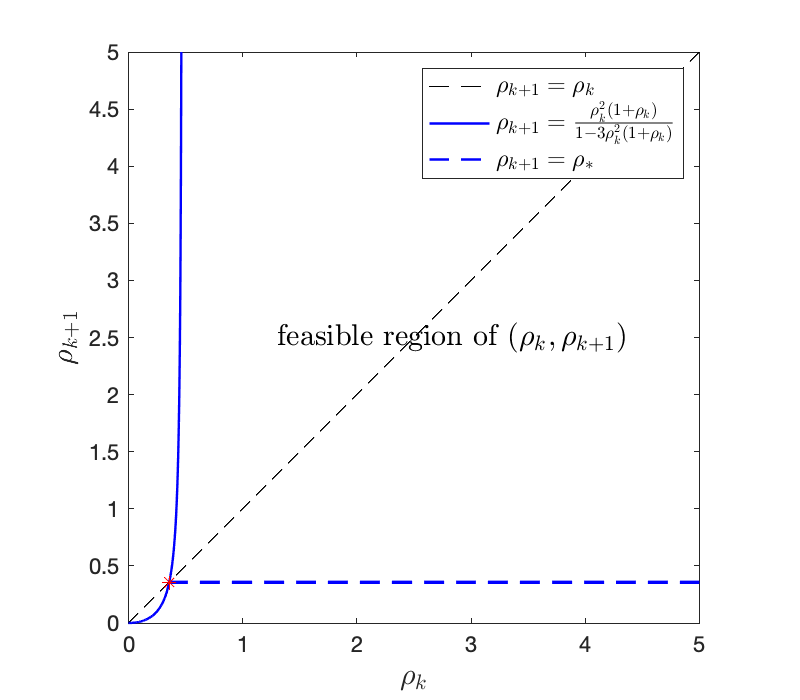}
    \caption{Feasible region of $(\rho_k,\rho_{k+1})$, enclosed by the blue solid curve and the blue dashed line, obtained from the constraint \eqref{condrhorho} for $k\geq 2$. The red star marker is point $(\rho_*,\rho_*)$.}
    \vspace{-0.2in}
    \label{fig:rho}
\end{figure}

Since $\rho_{k+1}>\rho_*$, we can obtain from the last inequality of \eqref{rhokk1}: $ \forall3\le k\le n-1$
\begin{equation}\label{eq:rho^*}
    \frac{2+\rho_*}{1+\rho_*}-\frac{\rho_k(\rho_k-2)}{1+\rho_k}>0\quad\Rightarrow\quad
    \rho_k<\rho^*\coloneqq \frac12\left[\frac{4+3\rho_*}{1+\rho_*}+\sqrt{\left(\frac{4+3\rho_*}{1+\rho_*}\right)^2+4\frac{2+\rho_*}{1+\rho_*}}\right].
\end{equation}
Then the last inequality of
\eqref{rhokk1} gives for $3\leq k\leq n-1$,
\begin{equation}\label{cond2}
\left\{
    \begin{aligned}
    &\rho_*<\rho_{k+1}< \rho^*,&& \rho_*<\rho_k<\rho^*,\\
 &\rho_*<\rho_{k+1}\leq\frac{-\rho_k^2+4\rho_k+2}{\rho_k^2-3\rho_k-1}, &&\eta_2<\rho_k<\rho^*, 
    \end{aligned}
\right.
\end{equation}
where 
$
    \eta_2=\frac{3+\sqrt{13}}{2}
$
is the positive root of 
$
    \rho^2-3\rho-1=0.
$
Note that $\rho^*\approx4.155358\le2+\sqrt{6}$, \eqref{cond2} implies $\rho_n\leq 2+\sqrt{6}$, the constraint in \eqref{eq:3.80}.

Combining \eqref{condc11}, \eqref{condrhorho} and \eqref{cond2}, it is sufficient to impose \eqref{thm1cond1} and \eqref{thm1cond2}  to ensure the positive semidefinteness of $\mathcal B_n$.
\end{proof}

\begin{corollary}\label{cor}
Let 
\begin{equation}
   \rho_{\rm L}
    \approx 0.457332766746115, \quad\rho_{\rm R} =\frac{3+\sqrt{17}}{2}\approx3.561552812808830.
\end{equation}
If $\rho_k\in [\rho_{\rm L},\rho_{\rm R}]$ for all $k\geq 2$, then
for any function $u$ defined on $[0,\infty)\times\Omega$ and $n\geq 2$,
\begin{equation}\label{eq:corcor}
    \mathcal B_n(u,u) = \sum_{k=1}^{n}\langle L_k^\alpha u, \delta_k u\rangle\geq \sum_{k=1}^{n} \frac{g_k(\alpha)}{2\Gamma(3-\alpha)} \|\delta_k u\|^2\geq C \sum_{k=1}^{n} \tau_k^{-\alpha} \|\delta_k u\|^2\geq 0,
\end{equation}
with $g_k(\alpha)$ given in \eqref{eq:gk} and $C>0$ is some constant depending on $\alpha$. 
\end{corollary}

 \begin{figure}[!ht]
    \centering
    \includegraphics[width=0.5\textwidth]{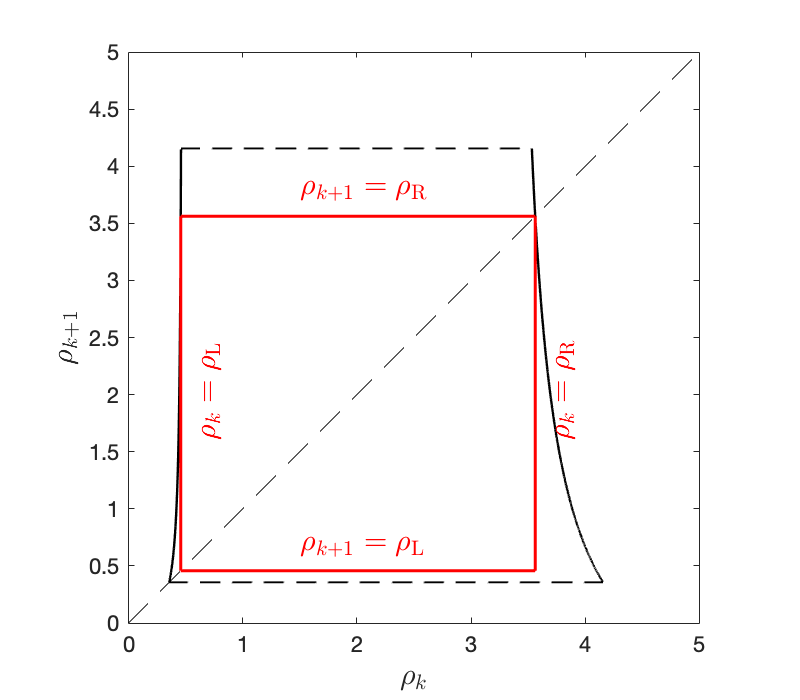}
    \caption{Region of $[\rho_{\rm L},\rho_{\rm R}]^2$ given in Corollary \ref{cor} for all $k\geq3$, which is a subregion of the region in Figure \ref{fig:rhok}.}
    \label{fig:rhok2}
\end{figure}

\begin{proof}
    For $k\ge 3$, we want to find the largest square subregion contained by the region shown in Figure \ref{fig:rhok}. In Figure \ref{fig:rhok2} we draw this square $[\rho_{\rm L}, \rho_{\rm R}]^2$.
  We first set $\rho_{k+1}=\rho_k$ in the inequality \eqref{rhokk1}. 
  Precisely speaking, we derive the quantities of $\rho_{\rm R}$ and $\rho_{\rm L}$ as follows:
$
   \rho_{\rm R} 
$
is the positive root of 
$
    1+\frac{1}{1+\rho}-\frac{\rho(\rho-2)}{1+\rho}=0
$
and 
$
  \rho_{\rm L} 
$
is the positive root of
$
    \frac{\rho^2(1+\rho)}{1-3\rho^2(1+\rho)}=\rho_{\rm R}.
$
Clearly, for any $\rho_k\in[ \rho_{\rm L}, \rho_{\rm R}]$, $k\ge 3$, the condition  \eqref{thm1cond2} in Theorem \ref{thm1} is satisfied.
Moreover, it is easy to check that if $(\rho_2,\rho_3)\in[ \rho_{\rm L}, \rho_{\rm R}]^2$, the condition \eqref{thm1cond1}  in Theorem \ref{thm1} also holds. 
\end{proof}

\begin{remark}\label{rem:3.4}
In comparison to the mesh requirements in \cite[Theorem 3.2]{kopteva2021error}, our requirements in Theorem \ref{thm1} and Corollary \ref{cor} are more flexible.
In \cite[Theorem 3.2]{kopteva2021error}, it is required that $\rho_j\geq \rho_{j+1}\geq 1$ for all $j\geq 2$, and there exists a constant $K>0$ such that $\rho_j\leq \bar\rho_\theta$ for all $j\geq K+1$, where $\bar \rho_\theta$ is the largest time step ratio depending on a parameter $\theta\in [1/2,1]$. This means that the time steps cannot decrease, and the range of allowable time step ratios is limited by $\bar\rho_\theta$. Moreover, the value of $\bar\rho_\theta$ depends on the parameter $\theta$ and the value of $\alpha$: when $\alpha = 0.1,~\bar\rho_{\frac12} = 1.0381$ and $\bar\rho_1 = 1.0849$; when $\alpha = 0.9, ~\bar\rho_{\frac12} = 1.0389$ and $\bar\rho_1 = 1.8015$.

In contrast, Corollary \ref{cor} only requires that the time step ratios are in the range $[0.4573, 3.5615]$, which allows for both increasing and decreasing time steps. This flexibility is important when using adaptive time meshes. 
\end{remark}

\section{$H^1$-stability of L2-type method for subdiffusion equation}\label{sect4}
\subsection{Stability for general nonuniform meshes}
We consider the following subdiffusion equation:
\begin{equation}\label{eq:subdiffusion}
\begin{aligned}
    \partial_t^\alpha u(t,x) & = \Delta u(t,x)+f(t,x),&& (t,x)\in (0,\infty)\times\Omega,\\
    u(t,x) &= 0,&& (t,x)\in (0,\infty)\times\partial\Omega,\\
    u(0,x)& = u^0(x),&& x\in \Omega,
\end{aligned}    
\end{equation}
where $\Omega$ is a bounded Lipschitz domain in $\mathbb R^d$. 
Given an arbitrary nonuniform mesh $\{\tau_k\}_{k\ge1}$, the L2 scheme of this subdiffusion equation is written as
\begin{equation}\label{eq:sch_sub}
\begin{aligned}
    L_k^\alpha u 
    &= \Delta u^k+f^k,&&  \text{in}~ \Omega,\\
    u^k&=0,&&\text{on} ~\partial\Omega,
\end{aligned}    
\end{equation}
where $f^k=f(t_k,\cdot)$.
\begin{theorem}\label{thm2}
Assume that $f(t,x) \in L^\infty([0,\infty);L^2(\Omega)) \cap BV([0,\infty); L^2(\Omega))$ is a bounded variation function in time and $ u^0\in H_0^1(\Omega)$.
If the nonuniform mesh $\{\tau_k\}_{k\ge1}$ satisfies \eqref{thm1cond1} and \eqref{thm1cond2} (or simply $\rho_k\in[\rho_{\rm L},\rho_{\rm R}]$ given in Corollary \ref{cor}),
then the numerical solution $u^n$ of the L2 scheme \eqref{eq:sch_sub} satisfies the following $H^1$-stability
\begin{equation}
\begin{aligned}
    \|\nabla u^n\|_{L^2(\Omega)}
   &\le\|\nabla u^0\|_{L^2(\Omega)} +2C_{f}C_{\Omega},
\end{aligned}
\end{equation}
where $C_f$ depends on the source term $f$, $C_\Omega$ is the Sobolev embedding constant depending on $\Omega$ and the dimension $d$.
\end{theorem}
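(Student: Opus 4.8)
The plan is to derive the $H^1$-stability by testing the L2 scheme \eqref{eq:sch_sub} against the increments $\delta_k u = u^k - u^{k-1}$ and summing, so that the positive semidefiniteness of $\mathcal{B}_n$ from Theorem~\ref{thm1} can be brought to bear. First I would take the $L^2(\Omega)$ inner product of the equation $L_k^\alpha u = \Delta u^k + f^k$ with $\delta_k u$ and sum over $k = 1$ to $n$. On the left this produces exactly $\mathcal{B}_n(u,u) = \sum_{k=1}^n \langle L_k^\alpha u, \delta_k u\rangle \ge 0$ by Theorem~\ref{thm1} (using that the mesh satisfies \eqref{thm1cond1}--\eqref{thm1cond2}, or the simpler hypothesis of Corollary~\ref{cor}). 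The term $\langle \Delta u^k, \delta_k u\rangle = -\langle \nabla u^k, \nabla(u^k - u^{k-1})\rangle$ is handled by the elementary identity $\langle a, a-b\rangle = \tfrac12(\|a\|^2 - \|b\|^2 + \|a-b\|^2)$ applied in $H^1$, which yields a telescoping sum bounded below by $\tfrac12(\|\nabla u^n\|^2 - \|\nabla u^0\|^2)$. Rearranging gives
\begin{equation*}
\tfrac12\|\nabla u^n\|_{L^2(\Omega)}^2 \le \tfrac12\|\nabla u^0\|_{L^2(\Omega)}^2 + \sum_{k=1}^n \langle f^k, \delta_k u\rangle.
\end{equation*}

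The main work is then to control the source term $\sum_{k=1}^n \langle f^k, \delta_k u\rangle$ without picking up an unwanted $\|\nabla u^k\|^2$ that cannot be absorbed. The natural device is summation by parts in $k$: writing $\sum_{k=1}^n \langle f^k, \delta_k u\rangle = \langle f^n, u^n\rangle - \langle f^1, u^0\rangle - \sum_{k=2}^n \langle f^k - f^{k-1}, u^{k-1}\rangle$, one exposes the differences $f^k - f^{k-1}$, whose total $L^2(\Omega)$-norm is controlled by the $BV([0,\infty);L^2(\Omega))$ assumption on $f$, while $\|f^n\|_{L^2(\Omega)}$ and $\|f^1\|_{L^2(\Omega)}$ are controlled by the $L^\infty([0,\infty);L^2(\Omega))$ assumption. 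Each $\|u^{k-1}\|_{L^2(\Omega)}$ (and $\|u^n\|_{L^2(\Omega)}$) is bounded by $C_\Omega \|\nabla u^{k-1}\|_{L^2(\Omega)}$ via the Poincaré--Sobolev embedding on the bounded Lipschitz domain $\Omega$, so the source term is bounded by $C_f C_\Omega \max_{0\le k\le n}\|\nabla u^k\|_{L^2(\Omega)}$ with $C_f = \|f\|_{L^\infty} + \mathrm{TV}(f) + \|f\|_{L^\infty}$ or a similar explicit combination depending on $f$ only.

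At this point the estimate reads $\tfrac12\|\nabla u^n\|^2 \le \tfrac12\|\nabla u^0\|^2 + C_f C_\Omega M_n$ where $M_n := \max_{0\le k\le n}\|\nabla u^k\|_{L^2(\Omega)}$. Since the right-hand side is increasing in $n$ and the bound holds for every $n$ (indeed for every prefix, because $\mathcal{B}_m(u,u)\ge0$ for all $m\le n$), one gets $\tfrac12 M_n^2 \le \tfrac12\|\nabla u^0\|^2 + C_f C_\Omega M_n$. Solving this quadratic inequality in $M_n$ gives $M_n \le \|\nabla u^0\|_{L^2(\Omega)} + 2 C_f C_\Omega$, which in particular yields the claimed bound $\|\nabla u^n\|_{L^2(\Omega)} \le \|\nabla u^0\|_{L^2(\Omega)} + 2 C_f C_\Omega$. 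The step I expect to be most delicate is the bookkeeping in the summation-by-parts/$BV$ argument for the source term — in particular making sure the boundary contribution at step $n$ and the choice of which index $u^{k-1}$ versus $u^k$ multiplies each difference are arranged so that only $M_n$ (not $\sum_k \|\nabla u^k\|$) appears, and that the constant $C_f$ collects exactly the $L^\infty$-in-time norm and total variation of $f$; everything else is the standard energy-telescoping plus the discrete Grönwall-free quadratic trick, which works precisely because $\mathcal{B}_n$ contributes a nonnegative term rather than something needing to be absorbed.
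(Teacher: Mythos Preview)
Your proposal is correct and follows essentially the same route as the paper's proof: test \eqref{eq:sch_sub} against $\delta_k u$, use Theorem~\ref{thm1} to drop $\mathcal B_n(u,u)$, telescope the Laplacian term via the polarization identity, sum the source term by parts to expose $\langle f^n,u^n\rangle-\langle f^1,u^0\rangle-\sum_{k=2}^n\langle \delta_k f,u^{k-1}\rangle$, bound it by $C_fC_\Omega\max_k\|\nabla u^k\|$ with $C_f=2\|f\|_{L^\infty}+\|f\|_{BV}$, and then solve the resulting quadratic in $M_n$ after taking the maximum over prefixes. The only cosmetic difference is that the paper treats $n=1$ separately (since Theorem~\ref{thm1} is stated for $n\ge2$), but as you implicitly use, $\mathcal B_1(u,u)=\Gamma(2-\alpha)^{-1}\tau_1^{-\alpha}\|\delta_1 u\|^2\ge0$ trivially, so your uniform treatment is fine.
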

\begin{proof}
When $n=1$, we have 
\begin{equation}\label{casen1}
    \frac{\delta_1 u}{\Gamma(2-\alpha)\tau_1^\alpha}=\Delta u^1+f^1.
\end{equation}
Multiplying \eqref{casen1} with $\delta_1 u$ and integrating over $\Omega$ yield 
\begin{equation*}
     \frac{\|\delta_1 u\|^2_{L^2(\Omega)}}{\Gamma(2-\alpha)\tau_1^\alpha}=-\frac12 \|\nabla u^1\|_{L^2(\Omega)}^2+ \frac12 \|\nabla u^0\|_{L^2(\Omega)}^2 - \frac12 \|\nabla \delta_1 u\|_{L^2(\Omega)}^2+\langle f^1, \delta_1 u \rangle.
\end{equation*}
Applying Cauchy--Schwarz inequality, then we can derive
\begin{align}\label{case1}
    \|\nabla u^1\|_{L^2(\Omega)}^2\le&\|\nabla u^0\|_{L^2(\Omega)}^2+4\|f\|_{L^\infty([0,\infty);L^2(\Omega))}\max_{0\le k\le 1} {\| u^k\|_{L^2(\Omega)}}\\
     \le &\|\nabla u^0\|_{L^2(\Omega)}^2+4\|f\|_{L^\infty([0,\infty);L^2(\Omega))}C_{\Omega}  \max_{0\le k\le 1} \|\nabla u^k\|_{L^2(\Omega)},
\end{align}
where $C_{\Omega}$ is the Sobolev embedding constant depending on $\Omega$ and the dimension.

We now consider the case  $n\ge 2$.
Multiplying \eqref{eq:sch_sub} with $\delta_k u$, integrating over $\Omega$, and summing up the derived equations over $n$ yield
\begin{align*}
\sum_{k=1}^{n}\langle L_k^\alpha u, \delta_k u\rangle
    =& \sum_{k=1}^{n}\langle \Delta u^k, \delta_k u\rangle +\sum_{k=1}^{n}\langle f^k, \delta_k u \rangle\\
     = & -\frac12 \|\nabla u^n\|_{L^2(\Omega)}^2 + \frac12 \|\nabla u^0\|_{L^2(\Omega)}^2 - \frac12 \sum_{k=1}^{n} \|\nabla \delta_k u\|_{L^2(\Omega)}^2\\
     &+\langle f^n,  u^n\rangle-\langle f^1,  u^0\rangle-\sum_{k=2}^{n}\langle \delta_k f, u^{k-1}\rangle.\end{align*}    
Applying the Cauchy--Schwarz inequality gives
\begin{equation*}
\begin{aligned}
    &\langle f^n,  u^n\rangle-\langle f^1,  u^0\rangle+\sum_{k=2}^{n}\langle \delta_k f,  u^{k-1}\rangle\\
    \le &\left( 2\|f\|_{L^\infty([0,\infty);L^2(\Omega))}+\|f\|_{BV([0,\infty); L^2(\Omega))}\right) \max_{0\le k\le n} {\|u^k\|_{L^2(\Omega)}}\\
     \le &C_{f}C_{\Omega}  \max_{0\le k\le n} {\|\nabla u^k\|_{L^2(\Omega)}},
\end{aligned} 
\end{equation*}
where $C_{f}= 2\|f\|_{L^\infty([0,\infty);L^2(\Omega))}+\|f\|_{BV([0,\infty); L^2(\Omega))}$. From Theorem \ref{thm1}, we then have for $n\ge2$,
\begin{equation}\label{case2}
\begin{aligned}
\|\nabla u^n\|_{L^2(\Omega)}^2 
     \le &\|\nabla u^0\|_{L^2(\Omega)}^2+2C_{f}C_{\Omega}  \max_{0\le k\le n} {\|\nabla u^k\|_{L^2(\Omega)}}.
     \end{aligned}    
\end{equation}
Note that \eqref{case1} implies that \eqref{case2} also holds for $n=1$.
For any $N \ge 1$, we take $\max_{0\le n\le N}$ on both sides of \eqref{case2}, to obtain
\begin{equation*}
\begin{aligned}
\max_{0\le n\le N}\|\nabla u^n\|_{L^2(\Omega)}^2 
     \le  \|\nabla u^0\|_{L^2(\Omega)}^2+2C_{f}C_{\Omega}  \max_{0\le n\le N} {\|\nabla u^n\|_{L^2(\Omega)}},
     \end{aligned}    
\end{equation*}
which indicates 
\begin{equation*}
\begin{aligned}
    \max_{0\le n\le N}\|\nabla u^n\|_{L^2(\Omega)}\le C_{f}C_{\Omega} +\sqrt{(C_{f}C_{\Omega})^2+\|\nabla u^0\|_{L^2(\Omega)}^2}\le\|\nabla u^0\|_{L^2(\Omega)} +2C_{f}C_{\Omega}.
\end{aligned}
\end{equation*}
\end{proof}

\begin{remark}
In \cite{quan2020numerical,quan2022energy}, it is proved that the L1 scheme on an arbitrary nonuniform mesh and the L2 scheme on uniform meshes are energy stable for time-fractional gradient flows, where the source term $f$ depends on $u$.
\end{remark}

\begin{remark}
In \cite{liao2021analysis}, Liao-Zhang consider the BDF$2$ scheme with nonuniform meshes for the  diffusion equation ($\alpha=1$) and prove that the scheme is stable if $\rho_k\leq (3+\sqrt{17})/2$.
Their energy stability result is similar to Theorem \ref{thm2}, but for integer-order diffusion equation.
\end{remark}

\subsection{Stability for graded meshes}
Consider the subdiffusion equation in finite time:
\begin{equation}\label{eq:subdiffusion1}
\begin{aligned}
    \partial_t^\alpha u(t,x) & = \Delta u(t,x)+f(t,x),&& (t,x)\in (0,T]\times\Omega,\\
    u(t,x) &= 0,&& (t,x)\in (0,T]\times\partial\Omega,\\
    u(0,x)& = u^0(x),&& x\in \Omega.
\end{aligned}    
\end{equation}
The L2 scheme of the subdiffusion equation is still written as
\begin{equation}\label{eq:sch_sub_graded}
\begin{aligned}
    L_k^\alpha u 
    &= \Delta u^k+f^k \quad\mbox{with}\quad f^k=f(t_k,\cdot).
\end{aligned}    
\end{equation}

The graded mesh with grading parameter $r>1$ is given by
\begin{equation}\label{eq:tauj}
\begin{aligned}
    t_j = \left(\frac j N\right)^r T, \quad \tau_j = t_j -t_{j-1} = \left[\left(\frac j N\right)^r-\left(\frac {j-1} N\right)^r\right]  T.
\end{aligned}
\end{equation}
Recall that the constraint \eqref{thm1cond1} for $k=2$ in Theorem \ref{thm1} is
 \begin{equation*}
     2+\frac{2}{1+\rho_3}+\frac{4\rho_2}{1+\rho_2}-\frac{\rho_2^3}{(1+\rho_2)^2}\ge 0,
 \end{equation*}
which gives a restriction on $r$ for the graded mesh \eqref{eq:tauj},
\begin{equation}\label{constraint_r0}
    1<r\leq 3.1253645.
\end{equation}
Moreover, it is easy to check that if \eqref{constraint_r0} is satisfied, $\rho_3\in[\rho_{\rm L},\rho_{\rm R}]$. Since $\rho_k$ decreases w.r.t. $k\ge 2$,
all constraints in Theorem \ref{thm1} are satisfied when \eqref{constraint_r0} holds. 
Therefore, the $H^1$-stability can be established if $1<r\leq 3.1253645$ according to Theorem \ref{thm2}.

However, we can provide an even better result on the constraint of $r$ by improving the splitting of $\mathbf B+\mathbf B^{\rm T}$ in the proof of Theorem \ref{thm1}.

\begin{theorem}\label{thm3}
Assume that $f(t,x) \in L^\infty([0,T];L^2(\Omega)) \cap BV([0,T]; L^2(\Omega))$ is a bounded variation function in time and $ u^0\in H_0^1(\Omega)$.
If the graded mesh defined by \eqref{eq:tauj} satisfies 
$
    1< r\leq 3.2016538,
$
then the numerical solution $u^n$ of the L2 scheme \eqref{eq:sch_sub_graded} satisfies the following $H^1$-stability:
\begin{equation}
\begin{aligned}
    \|\nabla u^n\|_{L^2(\Omega)}
   &\le\|\nabla u^0\|_{L^2(\Omega)} +2C_{f}C_{\Omega},
\end{aligned}
\end{equation}
where $C_f$ depends on the source term $f$, $C_\Omega$ is the Sobolev embedding constant depending on $\Omega$ and the dimension $d$.
\end{theorem}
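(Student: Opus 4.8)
The structure of the argument is that of Sections \ref{sect3}--\ref{sect4}: it suffices to establish $\mathcal B_n(u,u)\ge0$ for every $n\ge1$ (the case $n=1$ being the first part of the proof of Theorem \ref{thm2}), after which the energy argument of Theorem \ref{thm2} applies verbatim --- multiply \eqref{eq:sch_sub_graded} by $\delta_k u$, integrate over $\Omega$, sum over $k$, and use Cauchy--Schwarz together with $\|v\|_{L^2(\Omega)}\le C_\Omega\|\nabla v\|_{L^2(\Omega)}$ --- to give $\|\nabla u^n\|_{L^2(\Omega)}\le\|\nabla u^0\|_{L^2(\Omega)}+2C_fC_\Omega$. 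So the whole task is to prove that, for the ratios of the graded mesh \eqref{eq:tauj} with $1<r\le3.2016538$, the matrix $\mathbf M+\mathbf M^{\mathrm T}$ (with $\mathbf M$ as in \eqref{matM}) is positive semidefinite; this is a sharpening of Theorem \ref{thm1} in the graded-mesh case.

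First I would record two structural facts about the graded ratios $\rho_j=\bigl(j^r-(j-1)^r\bigr)/\bigl((j-1)^r-(j-2)^r\bigr)$, $\rho_2=2^r-1$: since $x\mapsto x^r$ is convex for $r>1$, one has $\rho_j>1$ for all $j\ge2$, and it is well known (and elementary) that $\{\rho_j\}_{j\ge2}$ is strictly decreasing with $\rho_j\downarrow1$. In particular $\rho_j>1>\rho_*$ and $\rho_j^{-2}(1+\rho_j)^{-1}-3<0<\rho_{j+1}^{-1}$, so \eqref{condition:rho} holds automatically. Hence Lemma \ref{lemmapro} applies in full, and, with the same splitting $\mathbf M=\mathbf A+\mathbf B$ and the same $\beta_k$ from \eqref{betak}, the matrix $\mathbf A+\mathbf A^{\mathrm T}$ is positive semidefinite exactly as in the proof of Theorem \ref{thm1}, via properties (1)--(3) and \cite[Lemma 2.1]{CSIAM-AM-1-478}. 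Moreover, because $\rho_k$ is decreasing, the constraints that govern the rows of the trailing block ($\rho_k$ with $k\ge3$, so $\rho_k\le\rho_3$) are satisfied with room to spare for $r\le3.2016538$; the only genuine obstruction comes from the leading block, which is where the improved splitting enters.

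The improvement is in how $\mathbf B+\mathbf B^{\mathrm T}$ is decomposed. In Theorem \ref{thm1} the $2\times2$ top-left block $\mathbf C$ has to surrender the amount $a_3^3$ in its $(2,2)$ entry so that the first row of $\mathbf D$ becomes diagonally dominant, and this is exactly what produces the restrictive inequality \eqref{condc11}. Instead I would use the decreasing order $\rho_2>\rho_3>\cdots$ to borrow back the slack that $\mathbf D$ carries in its later rows: write $\mathbf B+\mathbf B^{\mathrm T}$ as the sum of a block-diagonal matrix whose only nonzero block is the $3\times3$ leading block $\mathbf C'$ on the indices $1,2,3$ --- so that $[\mathbf C']_{22}=2c_1^2+2c_2^2-2\beta_2$ keeps its full mass and $[\mathbf C']_{33}=2c_2^3+2c_3^3-2\beta_3-a_4^4$ --- plus a block-diagonal matrix whose only nonzero block is the trailing block $\mathbf D'$ on the indices $3,\dots,n$, with the couplings $-a_3^3$ and $-a_4^4$ placed inside $\mathbf C'$ and $\mathbf D'$ respectively. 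Then $\mathbf D'$ is diagonally dominant by the estimates of Theorem \ref{thm1} for indices $\ge4$, and it remains only to show that the tridiagonal $\mathbf C'$ is positive semidefinite, i.e.\ (Sylvester) $[\mathbf C']_{11}>0$, $[\mathbf C']_{11}[\mathbf C']_{22}-(a_2^2)^2\ge0$, and $\det\mathbf C'\ge0$. The first is unchanged (use \eqref{tau21}--\eqref{tau12} with $\rho_2=2^r-1$); the second is strictly weaker than \eqref{condc11} because $[\mathbf C']_{22}$ is now larger by $a_3^3>0$; and the new requirement $\det\mathbf C'\ge0$ I would treat exactly as the $\mathbf C$-conditions were treated in Theorem \ref{thm1}: factor out the common powers of $(1-\alpha)^{-1}$, substitute $\rho_2=2^r-1$, $\rho_3=(3^r-2^r)/(2^r-1)$ and $\rho_4=(4^r-3^r)/(3^r-2^r)$, reduce to a one-variable function of $\alpha\in(0,1)$, verify it is monotone in $\alpha$ (derivative $\le0$, subdividing the $\alpha$-range and bounding as in \eqref{palpha1}--\eqref{palpha2} if necessary), and evaluate the limit as $\alpha\to1$. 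It is this $\alpha=1$ condition that yields the threshold $r\le3.2016538$. For the small cases $n=2$ and $n=3$ the matrix reduces to an explicit $2\times2$, respectively $3\times3$, form depending only on $\rho_2$, respectively $\rho_2$ and $\rho_3$, and is checked in the same way.

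The main obstacle is precisely the reduction of $\det\mathbf C'\ge0$: unlike the $2\times2$ case, this determinant couples the $(1-\alpha)^{-1}$-singular entries $a_k^k,c_k^k$ with the regular entries $a_j^k,c_j^k$ ($j<k$), so the bookkeeping of the powers of $(1-\alpha)$ and the monotonicity-in-$\alpha$ estimate become considerably heavier than in Theorem \ref{thm1}, and it is here that the constant $3.2016538$ is pinned down and essentially all of the new work lies. Everything else --- positive semidefiniteness of $\mathbf A+\mathbf A^{\mathrm T}$, diagonal dominance of $\mathbf D'$, and the concluding energy estimate --- is a transcription of the arguments already in place.
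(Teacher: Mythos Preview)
Your overall plan is correct and matches the paper: reduce to positive semidefiniteness of $\mathbf M+\mathbf M^{\mathrm T}$, keep the $\mathbf A$-block argument unchanged (here $\rho_j>1$ so \eqref{condition:rho} is automatic), and repair only the leading block of $\mathbf B+\mathbf B^{\mathrm T}$. The difference --- and the gap --- is in \emph{how} that leading block is handled, and this is precisely where the number $3.2016538$ comes from.

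You enlarge the leading block from $2\times2$ to $3\times3$ and propose to check $\det\mathbf C'\ge0$. The paper goes further: it takes a $5\times5$ leading block $\mathbf C_0$ (indices $1$--$5$) and decomposes it as a \emph{chain of four overlapping $2\times2$ blocks} $\mathbf C_1,\mathbf C_2,\mathbf C_3,\mathbf C_4$, passing only a tuned fraction of each $a_k^k$ to the next block: $0.7013\,a_3^3$, $0.45473\,a_4^4$, $0.4131\,a_5^5$. Each block then yields a scalar condition $\kappa_i(\alpha)\ge0$; monotonicity in $\alpha$ reduces this to $\kappa_i(1)\ge0$, and the binding constraint is
\[
\rho_2^{-1}\kappa_1(1)=4-\frac{1.4026\,\rho_3}{1+\rho_3}+\frac{4\rho_2}{1+\rho_2}-\frac{\rho_2^3}{(1+\rho_2)^2}\ge0,
\]
where $1.4026=2\times0.7013$. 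Substituting the graded ratios $\rho_2=2^r-1$, $\rho_3=(3^r-2^r)/(2^r-1)$ gives exactly $r\le3.201653814\ldots$. The other weights are chosen so that $\kappa_2(1),\kappa_3(1),\kappa_4(1)\ge0$ hold throughout this range.

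Your $3\times3$ scheme borrows slack only from row $3$, whereas the paper cascades slack from rows $3,4,5$ back to row $2$. Concretely, your condition $\det\mathbf C'\ge0$ is equivalent (for a tridiagonal matrix) to $[\mathbf C']_{22}\ge(a_2^2)^2/[\mathbf C']_{11}+(a_3^3)^2/[\mathbf C']_{33}$ with $[\mathbf C']_{33}=2c_2^3+2c_3^3-2\beta_3-a_4^4$; the paper instead needs only $[\mathbf C']_{22}\ge(a_2^2)^2/[\mathbf C']_{11}+0.7013\,a_3^3$, which is weaker because the weight $0.7013$ is affordable only after propagating surplus through $\mathbf C_2,\mathbf C_3,\mathbf C_4$. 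So your assertion that the $\alpha\to1$ limit of $\det\mathbf C'\ge0$ ``yields the threshold $r\le3.2016538$'' is not justified: that particular constant is an artefact of the four-block cascade with those particular weights, and a $3\times3$ truncation will produce a strictly smaller admissible $r$ (between $3.1253645$ and $3.2016538$). To recover the stated theorem you must either carry out the paper's $5\times5$ cascade with the tuned coefficients, or do the analogous computation for a $5\times5$ (or larger) Sylvester criterion and verify it reaches $3.2016538$.
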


\begin{proof}
We only need to prove the positive semidefiniteness of 
$\mathcal B_n(u,u)$ for the graded mesh.
As in the proof of Theorem \ref{thm1}, $\mathbf A + \mathbf A^{\rm T}$ is positive semidefinite for the graded mesh due to $\rho_k>1$.
Now we consider the following splitting
\begin{equation*}
    \mathbf B + {\mathbf B}^{\rm T}=
    \begin{pmatrix}
\mathbf C_0& \mathbf 0\\
\mathbf 0 &  \mathbf 0\\
\end{pmatrix}_{ n\times n}+
\begin{pmatrix}
\mathbf 0& \mathbf 0\\
\mathbf 0 &  \mathbf D_0\\
\end{pmatrix}_{n\times n},
\end{equation*}
where 
\begin{equation*}
\mathbf C_0=\begin{pmatrix}
\mathbf C_1&\mathbf 0\\
\mathbf 0 &  \mathbf 0\\
\end{pmatrix}_{5\times5}+
\begin{pmatrix}
 0&\mathbf 0&\mathbf 0\\
\mathbf 0&\mathbf C_2&\mathbf 0\\
\mathbf 0 &  \mathbf 0&\mathbf 0\\
\end{pmatrix}_{5\times5}
+\begin{pmatrix}
\mathbf 0&\mathbf 0&\mathbf 0\\
\mathbf 0&\mathbf C_3&\mathbf 0\\
\mathbf 0 &  \mathbf 0& 0\\
\end{pmatrix}_{5\times5}
+\begin{pmatrix}
\mathbf 0&\mathbf 0\\
\mathbf 0 &  \mathbf C_4\\
\end{pmatrix}_{5\times5},
\end{equation*}
with 
 \begin{equation*}
 \begin{aligned}
\mathbf C_1&=\begin{pmatrix}
2(1-\alpha)^{-1}\tau_1^{-\alpha}-2\beta_1& -a_2^{(2)}\\
-a_2^{(2)} &  2c_1^{(2)}+2c_2^{(2)}-2\beta_2-0.7013a_3^{(3)}\\
\end{pmatrix}_{2\times2},\\
\mathbf C_2&=\begin{pmatrix}
0.7013a_3^{(3)}&-a_3^{(3)}\\
-a_3^{(3)} &  2c_2^{(3)}+2c_3^{(3)}-2\beta_3-0.45473a^{4}_4\\
\end{pmatrix}_{2\times2},\\
\mathbf C_3&=\begin{pmatrix}
0.45473a^{(4)}_4&-a^{(4)}_4\\
-a^{(4)}_4 &  2c_3^{(4)}+2c^{(4)}_4-2\beta_4-0.4131a^{(5)}_5\\
\end{pmatrix}_{2\times2},\\
\mathbf C_4&=\begin{pmatrix}
0.4131a^{(5)}_5&-a^{(5)}_5\\
-a^{(5)}_5 &  2c^{(5)}_4+2c^{(5)}_5-2\beta_5-a^{(6)}_6\\
\end{pmatrix}_{2\times2},
 \end{aligned}
\end{equation*}
and
\begin{equation*}
\footnotesize
\begin{aligned}
    &\mathbf D_0=
 \begin{pmatrix}
 a^{(6)}_6&-a^{(6)}_6\\
  -a^{(6)}_6& 2c_5^{(6)}+2c_6^{(6)}-2\beta_6&-a^{(7)}_7\\
  &  \ddots & \ddots&\ddots\\
 & &  -a_n^{(n)}& 2c^{(n)}_{n-1}+2c^{(n)}_n-2\beta_n
\end{pmatrix}_{(n-4)\times (n-4)}.
\end{aligned}
\end{equation*}
Here, we consider the case of $n\geq 5$ by default, while in the case of $n\leq 4$, the proof is even simpler. 

We now study the range of $r$ to ensure the  positive semidefiniteness of $\mathbf C_1$, $\mathbf C_2$, $\mathbf C_3$, and $\mathbf C_4$.  
Note that from \eqref{tau12}, we have 
\begin{align}\label{eq:c1_11}
    [\mathbf C_1]_{11} =[\mathbf C]_{11} 
   >\frac{1+\alpha}{(1-\alpha)\tau_1^{\alpha}}>0.
\end{align}
Similar as \eqref{eq:ck2}, we have the following inequalities
\begin{equation}\label{eq:c1_22}
\small
\begin{aligned}
     [\mathbf C_1]_{22} 
     &>\frac{\alpha}{(2-\alpha)(1-\alpha)\tau_{2}^{\alpha}}\bigg(
    \frac{(1+\alpha)(2-\alpha)}{\alpha}+\frac{\rho_{3}-1+\alpha}{(1+\rho_{3})^{\alpha}}-\frac{1.7013\rho_{3}^{2-\alpha}}{1+\rho_{3}}+\frac{2\rho_2}{1+\rho_2}\bigg),\\
     [\mathbf C_2]_{22} 
     &>\frac{\alpha}{(2-\alpha)(1-\alpha)\tau_{3}^{\alpha}}\bigg(
    \frac{(1+\alpha)(2-\alpha)}{\alpha}+\frac{\rho_{4}-1+\alpha}{(1+\rho_{4})^{\alpha}}-\frac{1.45473\rho_{4}^{2-\alpha}}{1+\rho_{4}}+\frac{2\rho_3}{1+\rho_3}\bigg),\\
    [\mathbf C_3]_{22} 
    & >\frac{\alpha}{(2-\alpha)(1-\alpha)\tau_{4}^{\alpha}}\bigg(
    \frac{(1+\alpha)(2-\alpha)}{\alpha}+\frac{\rho_{5}-1+\alpha}{(1+\rho_{5})^{\alpha}}-\frac{1.4131\rho_{5}^{2-\alpha}}{1+\rho_{5}}+\frac{2\rho_4}{1+\rho_4}\bigg),\\
     [\mathbf C_4]_{22} 
     &>\frac{\alpha}{(2-\alpha)(1-\alpha)\tau_{5}^{\alpha}}\bigg(
    \frac{(1+\alpha)(2-\alpha)}{\alpha}+\frac{\rho_{6}-1+\alpha}{(1+\rho_{6})^{\alpha}}-\frac{2\rho_{6}^{2-\alpha}}{1+\rho_{6}}+\frac{2\rho_5}{1+\rho_5}\bigg).
\end{aligned}
\end{equation}
From \eqref{eq:a_k1}, \eqref{eq:c1_11} and \eqref{eq:c1_22}, we have
\begin{align*}
\small
         [\mathbf C_1]_{11}  [\mathbf C_1]_{22} -[\mathbf C_1]_{12} [\mathbf C_1]_{21}
     &> \frac{\alpha^2}{(1-\alpha)^2(2-\alpha)^2\tau_2^{2\alpha}} \kappa_1(\alpha),\\
         [\mathbf C_2]_{11}  [\mathbf C_2]_{22} -[\mathbf C_2]_{12} [\mathbf C_2]_{21}
     &> \frac{0.7013\alpha a_3^{(3)}}{(1-\alpha)(2-\alpha)\tau_3^{\alpha}} \kappa_2(\alpha),\\
         [\mathbf C_3]_{11}  [\mathbf C_3]_{22} -[\mathbf C_3]_{12} [\mathbf C_3]_{21}
     &> \frac{0.45473\alpha a_4^{(4)}}{(1-\alpha)(2-\alpha)\tau_4^{\alpha}} \kappa_3(\alpha),\\
         [\mathbf C_4]_{11}  [\mathbf C_4]_{22} -[\mathbf C_4]_{12} [\mathbf C_4]_{21}
     &> \frac{0.4131\alpha a_5^{(5)}}{(1-\alpha)(2-\alpha)\tau_5^{\alpha}} \kappa_4(\alpha),\\
\end{align*}
where
\begin{equation*}
\footnotesize
 \begin{aligned}
\kappa_1(\alpha)=&\frac{(1+\alpha)(2-\alpha)\rho_2^\alpha}{\alpha}\bigg(
    \frac{(1+\alpha)(2-\alpha)}{\alpha} +\frac{\rho_{3}-1+\alpha}{(1+\rho_{3})^{\alpha}}
   -\frac{1.7013\rho_{3}^{2-\alpha}}{1+\rho_{3}} +\frac{2\rho_2}{1+\rho_2}\bigg)-\left(\frac{\rho_2^2}{1+\rho_2}\right)^2,\\
   \kappa_2(\alpha)=&
    \frac{(1+\alpha)(2-\alpha)}{\alpha}+\frac{\rho_{4}-1+\alpha}{(1+\rho_{4})^{\alpha}}-\frac{1.45473\rho_{4}^{2-\alpha}}{1+\rho_{4}}+\frac{2\rho_3}{1+\rho_3}-\frac{\rho_3^2}{0.7013(1+\rho_3)}, \\
     \kappa_3(\alpha)=&
    \frac{(1+\alpha)(2-\alpha)}{\alpha}+\frac{\rho_{5}-1+\alpha}{(1+\rho_{5})^{\alpha}}-\frac{1.4131\rho_{5}^{2-\alpha}}{1+\rho_{5}}+\frac{2\rho_4}{1+\rho_4}-\frac{\rho_4^2}{0.45473(1+\rho_4)},\\
    \kappa_4(\alpha)=&
    \frac{(1+\alpha)(2-\alpha)}{\alpha}+\frac{\rho_{6}-1+\alpha}{(1+\rho_{6})^{\alpha}}-\frac{2\rho_{6}^{2-\alpha}}{1+\rho_{6}}+\frac{2\rho_5}{1+\rho_5}-\frac{\rho_5^2}{0.4131(1+\rho_5)}.
    \end{aligned}
 \end{equation*}
 Here for the graded mesh with grading parameter $r>1$, 
 \begin{equation*}
     \rho_k = \frac{k^r-(k-1)^r}{(k-1)^r-(k-2)^r},\quad k\geq 2,
 \end{equation*}
depends only on $k$ and $r$.
In Figure \ref{fig:kappa}, we illustrate $\kappa_1'(\alpha)$, $\kappa_2'(\alpha)$, $\kappa_3'(\alpha)$, $\kappa_4'(\alpha)$ w.r.t. $\alpha \in (0,1)$ and $r\in [1,3.25]$ for the graded mesh.
It can be observed that $\kappa_i'(\alpha)\leq 0, ~i=1,2,3,4$ for $\alpha\in(0,1)$ and $r\in[1,3.25]$.
A more rigorous proof can be provided but is omitted here due to the length of this work. 
Thus for any fixed $r\in[1,3.25]$,  $\kappa_i$ decreases w.r.t. $\alpha \in (0,1)$. 
  \begin{figure}[!ht]
    \centering
    \includegraphics[trim={0 4.in 0.5in 0},clip,width=0.48\textwidth]
    {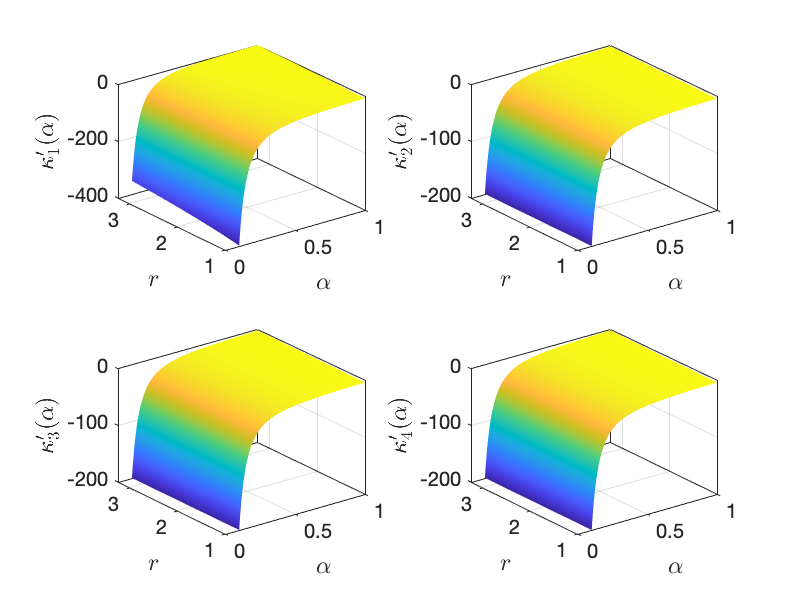}
    \includegraphics[trim={0.5in 0 0 4.in},clip,width=0.48\textwidth]
    {kappa.png}
    \caption{$\kappa_1'(\alpha)$, $\kappa_2'(\alpha)$, $\kappa_3'(\alpha)$, $\kappa_4'(\alpha)$ w.r.t. $(\alpha,r)$ where $\alpha\in(0,1)$ and $r\in[1,3.25]$. }
    \label{fig:kappa}
\end{figure}

To ensure $\kappa_1(\alpha)\geq 0$, we need to impose
$\kappa_1(1)\ge 0$, i.e.,
\begin{equation*}
     \rho_2^{-1}\kappa_1(1)=4-\frac{1.4026\rho_3}{1+\rho_3}+\frac{4\rho_2}{1+\rho_2}-\frac{\rho_2^3}{(1+\rho_2)^2}\ge 0,
\end{equation*}
which results in
\begin{equation}\label{eq:5.19}
   1< r\le 3.2016538.
\end{equation}
Further, when \eqref{eq:5.19} holds, it is easy to verify that $\kappa_i(1)\ge 0$, $i=2,3,4$ implying that $\kappa_i(\alpha)\ge 0$ for all $\alpha\in(0,1)$.
Since $[\mathbf C_i]_{11}>0, ~i=1,2,3,4$, the $2\times2$ matrices $\mathbf C_i$ are positive semidefinite when \eqref{eq:5.19} holds.
 
Similar to the proof of Theorem \ref{thm1}, the positive semidefiniteness of $\mathbf D_0$ can be guaranteed because $\rho_k\in (1,\rho_6] \subset[\rho_{\rm L},\rho_{\rm R}]$ for any $k\geq 6$, when \eqref{eq:5.19} holds.
The proof is completed.
\end{proof}

 
To better understand Theorem \ref{thm3}, we do a numerical test on the matrix $\mathbf M$ defined in \eqref{matM} for the graded mesh.
We take $T =1$, $n=K=7$,  $\alpha = 0.99999$  and $r= 3.20185$ (slightly larger than $3.2016538$ in Theorem \ref{thm3}). As a consequence, the symmetric matrix $\mathbf M+\mathbf M^{\mathrm T}$ has a negative eigenvalue, i.e., $\mathbf M+\mathbf M^{\mathrm T}$ is not positive semidefinite.
 This indicates that the constraint $r\leq 3.2016538$ in Theorem \ref{thm3} is {\em almost optimal} to ensure the positive semidefiniteness of $\mathcal B_n(u,u)$.

 \section{$H^1$-convergence of L2-type method for subdiffusion equation}\label{sect5}

Let $\Pi_{1,j}$ and $\Pi_{2,j}$ be the standard Lagrange interpolation operators with interpolation  points:
 \begin{equation*}
     \Pi_{1,j}:{t_{j-1},t_j},\quad \Pi_{2,j}:{t_{j-1},t_j,t_{j+1}}.
\end{equation*}
We have the following estimate of the truncation error of L2 method.
\begin{lemma}\label{lem:truncation}
Given a function $u$ satisfying $|\partial_t^l u(t)|\le C_l(1+t^{\alpha-l})$ for $l=1,3$ and nonuniform mesh $\{\tau_k\}_{k\ge 1}$ with $\rho_k \in [\rho_{\rm min}, \rho_{\rm max}]$ for some $0<\rho_{\rm min}<\rho_{\rm max}$, the truncation error is given by 
\begin{equation}\label{eq:localrk}
r_k \coloneqq \frac{1}{\Gamma(1-\alpha)}\int _0^{t_k}(t_k-s)^{-\alpha}\partial_s[u(s)-I_2u(s)]\,\ds,\quad k\ge 1,
\end{equation} 
where $I_2 u=\Pi_{2,j}u$ on $(t_{j-1},t_j)$ for $j<k$ and $I_2 u=\Pi_{2,k-1}u$ on $(t_{k-1},t_k)$.
Then  $|r_1|\le C$ and for $k\geq 2$,
\begin{equation}\label{eq:approxerror}
   | r_k|\le C\bigg(\tau_1^\alpha( t_k-t_1)^{-\alpha}
   +\sum_{j=2}^{k-1}\tau_j^4(1+t_{j-1}^{\alpha-3})(t_k-t_j)^{-\alpha-1}+\tau_k^{3-\alpha}(1+t_{k-1}^{\alpha-3})\bigg),
\end{equation}
where $C$ is some constant depending on $\alpha,~\rho_{\rm min},~\rho_{\rm max}$ and $C_l$ for $l=1,3$.
\end{lemma}
 \begin{proof}
 Let $C$ be a generic constant depending on $\alpha,~\rho_{\rm min},~\rho_{\rm max}$ and $C_l$.
The case of $k=1$ is easy to verify. 
We now consider the case of $k\geq 2$.


 Let $\chi(s) \coloneqq u-I_2u$.  On the interval $(t_0,t_1)$, it is not difficult to obtain 
$
     |\partial_s\chi(s)|\leq  C s^{\alpha-1},
$
which yields  (see similar result in \cite[Equation (5.12)]{stynes2017error})
 \begin{equation}\label{eq:intervalt1}
  \left|\int _0^{t_1}(t_k-s)^{-\alpha}\partial_s\chi(s)\,\ds\right|
  \le C \tau_1^\alpha(t_k-t_1)^{-\alpha}.
\end{equation}
On the interval $(t_{j-1},t_j)$, $2\le j\le k-1$, we have 
\begin{equation*}
    |\chi(s)|=\left|\frac{u^{(3)}(\xi)}{6}(s-t_{j-1})(s-t_{j})(s-t_{j+1})\right|\le C\tau_j^3(1+t_{j-1}^{\alpha-3}),
\end{equation*}
where $\xi \in (t_{j-1},t_{j+1})$. 
This yields
\begin{equation}\label{eq:intervaltj}
\begin{aligned}
&\left| \int _{t_{j-1}}^{t_j}(t_k-s)^{-\alpha}\partial_s\chi(s)\,\ds\right|=
\left|-\alpha\int _{t_{j-1}}^{t_j}(t_k-s)^{-\alpha-1}\chi(s)\,\ds\right|\\
\le&C (1+t_{j-1}^{\alpha-3})\tau_j^3\int _{t_{j-1}}^{t_j}(t_k-s)^{-\alpha-1}\,\ds  
\le C\tau_j^4(1+t_{j-1}^{\alpha-3})(t_k-t_j)^{-\alpha-1}.
\end{aligned}
\end{equation}
On the interval $(t_{k-1},t_k)$, we have
\begin{equation*}
      |\chi(s)|\le C(1+t_{k-1}^{\alpha-3})(s-t_{k-2})(t_k-s) (s-t_{k-1})
    \le C\tau_k^2 (1+t_{k-1}^{\alpha-3})(t_k-s), 
\end{equation*}
which yields
\begin{equation}\label{eq:intervaltk}
\begin{aligned}
&\left| \int _{t_{k-1}}^{t_k}(t_k-s)^{-\alpha}\partial_s\chi(s)\,\ds\right|=\left|-\alpha\int _{t_{k-1}}^{t_k}(t_k-s)^{-\alpha-1}\chi(s)\,\ds\right|\\
\le&C\tau_k^2(1+t_{k-1}^{\alpha-3})\int _{t_{k-1}}^{t_k}(t_k-s)^{-\alpha}\,\ds\le C\tau_k^{3-\alpha}(1+t_{k-1}^{\alpha-3}).
\end{aligned}
\end{equation}
Combining \eqref{eq:intervalt1}, \eqref{eq:intervaltj} and \eqref{eq:intervaltk}, we obtain the estimate \eqref{eq:approxerror} of truncation error. 
 \end{proof}
 
 \begin{theorem}[$H^1$-convergence for nonuniform meshes]\label{thm:conv_nonu}
 Assume that $u\in C^3((0,T],H^1_0(\Omega))$ is the solution to \eqref{eq:subdiffusion1} and $|\partial_t^{l} u(t)|\le C_l(1+t^{\alpha-l})$ for $l=1,3$, $0< t\le T$. If the nonuniform mesh satisfies $\rho_k\in [\rho_{\rm L},\rho_{\rm R}]$, then the numerical solutions $u^k$ of L2 scheme \eqref{eq:sch_sub_graded} have the following error estimate:
 \begin{equation*}
 \begin{aligned}
    &\| \nabla u(t_n) -\nabla u^n\|_{L^2(\Omega)}^2 \le \\
    & C  \sum_{k=2}^{n} \tau_k^{\alpha}\bigg(\tau_1^\alpha( t_k-t_1)^{-\alpha}
   +\sum_{j=2}^{k-1}\tau_j^4(1+t_{j-1}^{\alpha-3})(t_k-t_j)^{-\alpha-1}+\tau_k^{3-\alpha}(1+t_{k-1}^{\alpha-3})\bigg)^2,
   \end{aligned}
 \end{equation*}
where $C$ is a constant depending on $\alpha$ and $C_l$, $l=1,3$ and $\Omega$.
 \end{theorem}
\begin{proof}
Let $e^k=u(t_k)-u^k$.
We have the error equation
\begin{equation}\label{eq:error}
\begin{aligned}
       L_k^\alpha e 
    = \Delta e^k-r_k  \text{ in}~ \Omega,\quad
    u^k =0\text{ on} ~\partial\Omega,
\end{aligned}    
\end{equation}
where $r_k$ is defined in \eqref{eq:localrk}.
Multiplying \eqref{eq:error} with $\delta_k e = e^{k}-e^{k-1}$, integrating over $\Omega$, and summing up the derived equations over $n$ yield
\begin{equation*}
\sum_{k=1}^{n}\langle L_k^\alpha e, \delta_k e\rangle
    = -\frac12 \|\nabla e^n\|_{L^2(\Omega)}^2 + \frac12 \|\nabla e^0\|_{L^2(\Omega)}^2 - \frac12 \sum_{k=1}^{n} \|\nabla \delta_k e\|_{L^2(\Omega)}^2-\sum_{k=1}^{n}\langle r_k, \delta_k e \rangle. 
\end{equation*}
From  Corollary \ref{cor}, we then have
\begin{equation}\label{case22}
\begin{aligned}
\|\nabla e^n\|_{L^2(\Omega)}^2 
     &\le -  \sum_{k=1}^{n} \|\nabla \delta_k e\|_{L^2(\Omega)}^2-C\sum_{k=1}^{n}\tau_k^{-\alpha}\| \delta_k e\|^2_{L^2(\Omega)}-2\sum_{k=1}^{n}\langle r_k, \delta_k e \rangle
\\&
     \leq C^{-1} \sum_{k=1}^{n} \tau_k^{\alpha} \|r_k\|_{L^2(\Omega)}^2.
     \end{aligned}    
\end{equation}
The desired error estimate is obtained from Lemma \ref{lem:truncation}. 
\end{proof}

In Theorem \ref{thm:conv_nonu}, it is required $\rho_{\rm L}\leq \rho_k \leq \rho_{\rm R}$. 
However, for the standard graded mesh \eqref{eq:tauj}, when $r$ is large, the ratios of first several time steps often exceed $\rho_{\rm R}$. 
This motivates researchers to modify the graded meshes. 
Let 
\begin{equation*}
    k_0 \coloneqq \min \left\{k\in \mathbb N:~ \frac{(k+2)^r-(k+1)^r}{(k+1)^r-k^r}\leq \rho_{\rm R}\right\}.
\end{equation*}
We consider the following modified graded mesh proposed by Kopteva in \cite{kopteva2021error}:
\begin{equation}\label{eq:modgm}
 t_k = T\frac{\left(k+k_0\right)^r - k_0^r}{\left(N+k_0\right)^r - k_0^r}, \quad \tau_k = t_k-t_{k-1},\quad 1\leq k\leq N.
\end{equation}
Note that when $k_0 = 0$, this modified graded mesh \eqref{eq:modgm} coincides with the standard graded mesh \eqref{eq:tauj}.
It is not difficult to find that the modified graded mesh satisfies
\begin{equation}\label{eq:prop_modgm}
\tau_k  \leq C T N^{-r} k^{r-1}\quad \mbox{and}\quad 1\leq {\tau_k}/{\tau_{k-1}}\leq \rho_{\rm R}.
\end{equation}
\begin{lemma}\label{lem:rkgraded}
For the modified graded mesh \eqref{eq:modgm}, the truncation error $r_k$ defined in \eqref{eq:localrk} satisfies
$
   | r_k|\le C k^{-\min\{r\alpha,3-\alpha\}}.
$
\end{lemma}
\begin{proof}
     Based on Lemma \ref{lem:truncation}, this proof is similar to the one of \cite[Lemma 5.2]{stynes2017error}. 
\end{proof}
\begin{theorem}[$H^1$-convergence for modified graded mesh]\label{thm:conv_graded}
Assume that $u\in C^3((0,T],H^1_0(\Omega))$ is the solution to \eqref{eq:subdiffusion1} and $|\partial_t^{l} u(t)|\le C_l(1+t^{\alpha-l})$ for $l=1,3$, $0< t\le T$. 
Consider the numerical solutions of L2 scheme \eqref{eq:sch_sub_graded} on the modified graded mesh \eqref{eq:modgm}.
For any $1\leq n\leq N$, we have the following error estimate
\begin{equation}\label{eq:error_modgm}
\| \nabla u(t_n) -\nabla u^n\|_{L^2(\Omega)} \le \left\{
\begin{aligned}
& C N^{-(2r\alpha+\alpha-1)/2}, && {\rm if~} 1< r<\max\left\{1,1/\alpha-1\right\},\\
& C N^{-r\alpha/2}, && {\rm if~}\max\left\{1,1/\alpha-1\right\}< r<5/\alpha-1,\\
& C N^{-(5-\alpha)/2}, && {\rm if~} r>5/\alpha-1.
\end{aligned}
\right.
\end{equation}
If $r= 1/\alpha-1$ with $\alpha<1/2$, or $r =5/\alpha-1$, we have
\begin{equation}\label{eq:error_modgm2}
     \| \nabla u(t_n) -\nabla u^n\|_{L^2(\Omega)} \le C N^{-r\alpha/2}\log N, \quad \forall 1\leq n\leq N.
\end{equation}
\end{theorem}
\begin{proof}
According to \eqref{case22}, \eqref{eq:prop_modgm} and Lemma \ref{lem:rkgraded}, we have
\begin{equation*}
\begin{aligned}
& \| \nabla u(t_n) -\nabla u^n\|_{L^2(\Omega)}^2 \le C\sum_{k=1}^n \tau_k^{\alpha} \|r_k\|_{L^2(\Omega)}^2
    \leq C N^{-r \alpha } \sum_{k=1}^n k^{(r-1)
    \alpha} k^{-2\min\{r\alpha,3-\alpha\}}\\ =  & C N^{-r \alpha } \sum_{k=1}^n k^{-\min\{r\alpha+\alpha,6-r\alpha-\alpha\}}.
\end{aligned}
\end{equation*}
Then \eqref{eq:error_modgm} and \eqref{eq:error_modgm2} can be derived, using the techniques in the proof of \cite[Lemma 5.2]{stynes2017error}.
\end{proof}
\begin{figure}[!ht]
    \centering
    \includegraphics[trim={1.5in 0 1.5in 0},clip,width=0.85\textwidth]{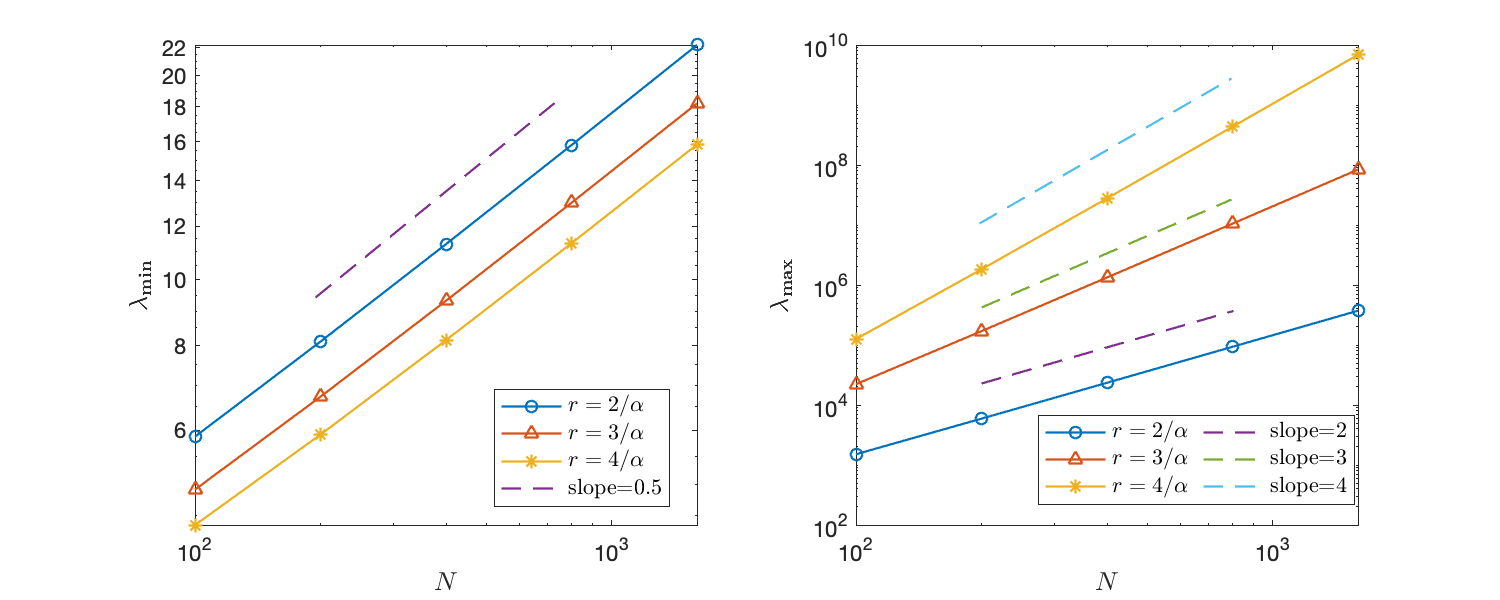}
     \vspace{-0.15in}
    \caption{Minimum and maximum eigenvalues of  the  matrix $\frac{\mathbf M+\mathbf M^{T}}{2}$ for different values of $r$ and $N$, where $T=1$ and $\alpha=0.5$.}
    \label{fig:eigL2}
\end{figure}

\begin{remark}
Based on the positive definiteness result \eqref{eq:corcor} in Corollary \ref{cor}, we can derive lower bounds of the minimum and maximum   eigenvalues for the  matrix $(\mathbf M+\mathbf M^{T})/2$:
\begin{equation*}
\begin{aligned}
    \lambda_{\rm min}=\min_{\|\mathbf v\|_2=1} \mathbf v \mathbf M \mathbf v^T\ge C\tau_{\rm max}^{-\alpha},\quad
    \lambda_{\rm max}=\max_{\|\mathbf v\|_2=1} \mathbf v \mathbf M \mathbf v^T\ge C \tau_{\rm min}^{-\alpha},
    \end{aligned}
\end{equation*}
where $\mathbf v = (v_1,\ldots,v_N)\in \mathbb R^N$, $\tau_{\rm min}=\min_{1\le k\le N}\tau_k$ and $\tau_{\rm max}=\max_{1\le k\le N} \tau_k$.
For the modified graded mesh \eqref{eq:modgm} with grading parameter $r$, according to \eqref{eq:prop_modgm}, 
we have
\begin{equation*}
    \lambda_{\rm min}\ge C N^\alpha \quad \text{and} \quad     \lambda_{\rm max}\ge CN^{r\alpha}.
\end{equation*}
 Figure \ref{fig:eigL2} gives an example of $\lambda_{\rm min}$ and $\lambda_{\rm max}$ for different $r$ and $N$, where it is observed numerically that $\lambda_{\rm min}$ is $\mathcal O(N^\alpha)$ and  $\lambda_{\rm max}$ is $\mathcal O(N^{r\alpha})$. This indicates that the lower bound in \eqref{eq:corcor} is optimal. 
\end{remark}

 \section{Numerical tests}\label{sect6}
 We restrict our testing to the L2 scheme \eqref{eq:sch_sub_graded} on modified graded meshes \eqref{eq:modgm} for simplicity.
Consider the subdiffusion equation \eqref{eq:subdiffusion1} with $T=1$, $\Omega=[-1,1]^2$, and
$
f(t,x,y)=\left(\Gamma(1+\alpha)+2\pi^2 t^\alpha\right)\sin(\pi x)\sin( \pi y).
$
The exact solution to this subdiffusion equation is given by
$
u(t,x,y)=t^\alpha\sin(\pi x)\sin(\pi y).
$
We employ the spectral collocation method \cite{trefethen2000spectral,shen2011spectral} in space with $20^2$ Chebyshev--Gauss--Lobatto points for this example.

Table \ref{tab} displays the maximum $H^1$-errors of the numerical solutions of the L2 scheme for different values of $\alpha$, $r$, and $N$. Theorem \ref{thm:conv_graded} predicts that when $r=2/\alpha$, $3/\alpha-1$, and $5/\alpha-1$, the maximum $H^1$-errors shall be $\mathcal O(N^{-1})$, $\mathcal O(N^{-(3-\alpha)/2})$, and $\mathcal O(N^{-(5-\alpha)/2}\log N)$, respectively. However, as shown in Table \ref{tab}, the practical convergence orders in $H^1$-norm are approximately $\min\{r\alpha,3-\alpha\}$, consistent with the global convergence results in $L^2$-norm proved in \cite{kopteva2021error}. Intuitively, the sharp convergence order in $H^1$-norm may be $\min\{r\alpha,3-\alpha\}$ for modified graded meshes. However, we can only prove the ${(5-\alpha)}/{2}$ order when $r>5/\alpha-1$ at this time.

 \begin{table}[htb!]
 \small
\renewcommand\arraystretch{1.}
\begin{center}
\def\temptablewidth{1\textwidth}
\caption{ Maximum $H^1$-errors and convergence orders for the L2 scheme on the modified graded meshes with different $\alpha$, $r$ and $N$.}\vspace{0in}\label{tab}
{\rule{\temptablewidth}{1pt}}
\begin{tabular*}{\temptablewidth}{@{\extracolsep{\fill}}ccccccc}
 &&$N=200$ & $N=400$&$N=800 $ & $N=1600 $&$N=3200$\\ \hline
$\alpha=0.3$ &$r=\frac{2}{\alpha}$& 4.5156-04&	1.1599-04&	2.9342e-05&	7.3758e-06& 1.8488e-06\\
&&  -- & 1.9609   & 1.9829  &  1.9921   & 1.9962\\
&$r=\frac{3-\alpha}{\alpha}$& 8.3905e-05&	1.3473e-05&	2.1163e-06&	3.2901e-07	&5.0889e-08\\
&&  -- & 2.6387  &  2.6704  &  2.6854   & 2.6927\\
&$r=\frac{5-\alpha}{\alpha}$&1.1086e-05&	1.6360e-06&	2.3780e-07&	3.4502e-08&	5.0261e-09\\
&&  -- &  2.7605   & 2.7823   & 2.7850   & 2.7792

\\
\hline
$\alpha=0.5$ &$r=\frac{2}{\alpha}$& 1.9100e-04&	4.8699e-05&	1.2265e-05&	3.0760e-06&	7.7008e-07\\
&&  -- &  1.9717  &  1.9893 &   1.9955  &  1.9980
\\
&$r=\frac{3-\alpha}{\alpha}$&5.2759e-05&	9.5269e-06&	1.7007e-06&	3.0208e-07&	5.3527e-08\\
&&  -- & 2.4693   & 2.4858 &   2.4931 &   2.4966\\
&$r=\frac{5-\alpha}{\alpha}$& 1.2148e-05&	2.1291e-06&	3.7225e-07&	6.5143e-08&	1.1421e-08\\
&&  -- &     2.5124  &  2.5159   & 2.5146 &   2.5119\\
\hline
$\alpha=0.7$ &$r=\frac{2}{\alpha}$& 1.7692e-04&	4.4591e-05&	1.1183e-05&	2.7980e-06&	6.9966e-07\\
&&  -- &  1.9883 &   1.9954  &  1.9988  &  1.9997\\
&$r=\frac{3-\alpha}{\alpha}$& 5.7060e-05&	1.1817e-05&	2.4153e-06&	4.9159e-07&	9.9914e-08\\
&&  --& 2.2715   & 2.2906 &   2.2967  &  2.2987\\
&$r=\frac{5-\alpha}{\alpha}$&1.4028e-05	&2.8337e-06&	5.7333e-07&	1.1614e-07&	2.3546e-08\\
&&  -- &    2.3075   & 2.3053 &   2.3035 &   2.3023
\\
\end{tabular*}
{\rule{\temptablewidth}{1pt}}
\end{center}
\end{table}

In Figure \ref{fig:H1error}, we plot the $H^1$-errors at final time $t=T$ with respect to $N$ for different values of $\alpha$ and $r$.
Based on the local convergence results established in \cite{kopteva2021error}, the final-time $L^2$-error is expected to decay as $\mathcal O(N^{-(3-\alpha)})$ when $r>3-\alpha$.
We observe that the final-time $H^1$-errors also exhibit a similar decay rate of $\mathcal O(N^{-(3-\alpha)})$ when $r>3-\alpha$.
However, a rigorous analysis of the pointwise error estimates in the $H^1$-norm of L2 schemes is left for future research.

\begin{figure}[!ht]
    \centering
    \includegraphics[trim={1.5in 0 1.5in 0},clip,width=0.95\textwidth]{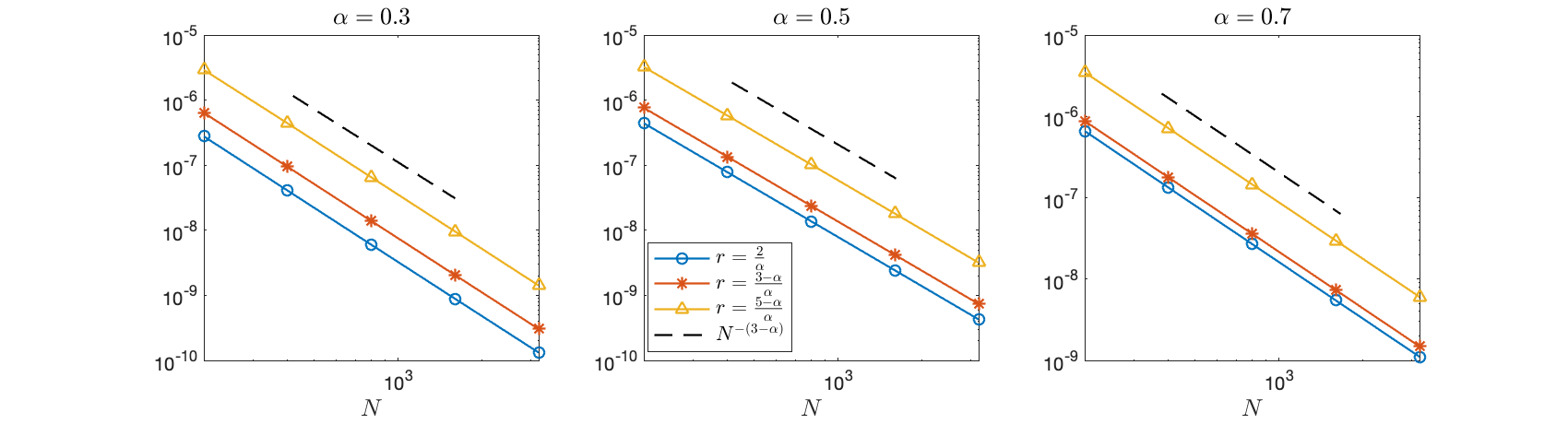}
     \vspace{-0.15in}
    \caption{$H^1$-errors at $t=T$ of numerical solutions of the L2 scheme on the modified graded meshes with different $\alpha$ and $r$.}
    \label{fig:H1error}
\end{figure}

Finally, we test if the positive definiteness result \eqref{eq:corcor} is optimal for this example. 
We define the following ratio used for error analysis in \eqref{case22}:
\begin{equation}
    \gamma_n= \mathcal B_n( e, e)/\sum_{k=1}^n \tau_k^{-\alpha} \|\delta_k e\|^2_{L^2(\Omega)},\quad n\geq 2.
\end{equation}
Fix $N=400$ and $r=(3-\alpha)/\alpha$.
Figure \ref{fig:gamman} plots $\gamma_n$ with respect to $n$ for different values of $\alpha$. The quantity $\gamma_n$ is observed to have lower and upper bounds.
Therefore, the positive definiteness result \eqref{eq:corcor} is optimal. 
 \begin{figure}[!ht]
    \centering
    \includegraphics[trim={1.5in 0 1.5in 0},clip,width=0.95\textwidth]{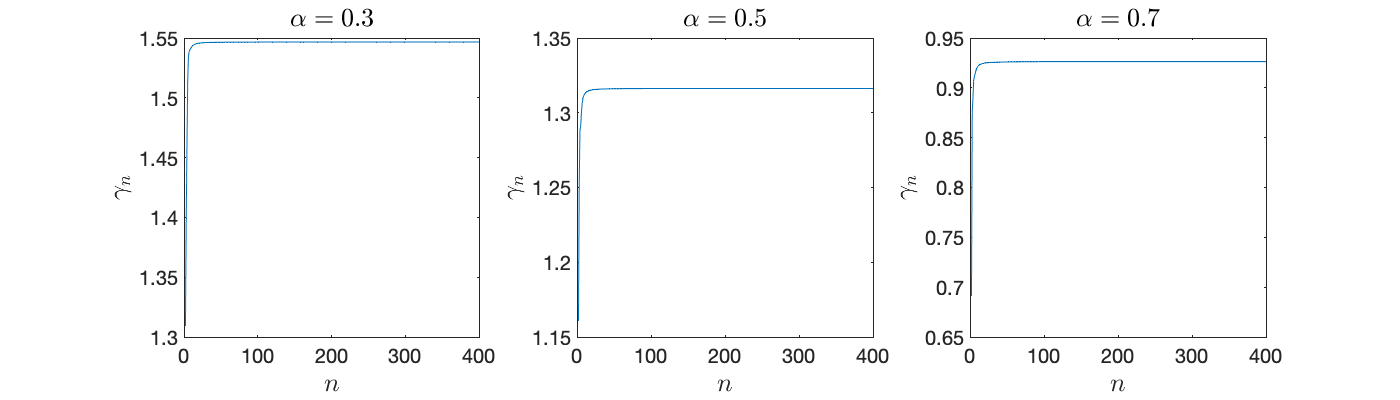}
     \vspace{-0.15in}
    \caption{The values of $\gamma_n$ for $2\le n\le N$  for $\alpha=0.3,\ 0.5,\ 0.7$ where $N=400$ and $r=(3-\alpha)/\alpha$.}
    \label{fig:gamman}
\end{figure}

\section*{Acknowledgements}
The authors thank one anonymous reviewer for suggesting to add Remark \ref{rem:3.4} and another anonymous reviewer for inspiring us to consider the $H^1$-norm error analysis for completeness. 

\bibliography{bibfile}
\bibliographystyle{amsplain}

\end{document}